\definecolor{codegreen}{rgb}{0,0.6,0}
\definecolor{codegray}{rgb}{0.5,0.5,0.5}
\definecolor{codepurple}{rgb}{0.58,0,0.82}
\definecolor{backcolour}{rgb}{0.95,0.95,0.92}
\lstdefinestyle{sagestyle}{
	backgroundcolor=\color{backcolour},
	commentstyle=\color{codegreen},
	keywordstyle=\color{magenta},
	numberstyle=\tiny\color{codegray},
	stringstyle=\color{codepurple},
	basicstyle=\ttfamily\footnotesize,
	breakatwhitespace=false,
	breaklines=true,
	captionpos=b,
	keepspaces=true,
	numbers=none,
	numbersep=5pt,
	showspaces=false,
	showstringspaces=false,
	showtabs=false,
	tabsize=2
}
\theoremstyle{plain} 
\newtheorem{theorem}{Theorem}[section]
\newtheorem*{theorem*}{Theorem}
\newtheorem{corollary}[theorem]{Corollary}
\newtheorem{lemma}[theorem]{Lemma}
\newtheorem{proposition}[theorem]{Proposition}
\theoremstyle{remark}
\newtheorem{definition}[theorem]{Definition}
\newtheorem{remark}[theorem]{Remark}
\def\ee{\mathbf{e}}
\def\vv{\mathbf{v}}
\def\ww{\mathbf{w}}
\def\CC{\mathbb{C}}
\def\NN{\mathbb{N}}
\def\QQ{\mathbb{Q}}
\def\RR{\mathbb{R}}
\def\ZZ{\mathbb{Z}}
\def\calC{\mathcal{C}}
\def\calD{\mathcal{D}}
\def\calM{\mathcal{M}}
\def\calo{\mathcal{O}}
\def\calT{\mathcal{T}}
\DeclareMathOperator{\CFF}{CFF}
\DeclareMathOperator{\SL}{SL}
\title[On $\mathfrak{P}$-adic continued fractions with extraneous denominators]{On $\mathfrak{P}$-adic continued fractions with extraneous denominators: some explicit finiteness results}
\author{Laura Capuano}
\address[Laura Capuano]{Dipartimento di Matematica e Fisica, Università degli Studi di Roma Tre}
\email{laura.capuano@uniroma3.it}
\author{Sara Checcoli}
\address[Sara Checcoli]{Univ. Grenoble Alpes, CNRS, IF, 38000 Grenoble, France}
\email{sara.checcoli@univ-grenoble-alpes.fr}
\author{Marzio Mula}
\address[Marzio Mula]{Research Institute CODE, Universität der Bundeswehr München}
\email{marzio.mula@unibw.de}
\author{Lea Terracini}
\address[Lea Terracini]{Dipartimento di Informatica, Università di Torino}
\email{lea.terracini@unito.it}
\date{\today}
\keywords{$p$-adic continued fractions, finiteness, Weil height, division chains.}
\subjclass{11J70, 11D88, 11Y65}
\begin{document}

\maketitle
\begin{abstract}
  Let $K$ be a number field. We show that, up to allowing a finite set of denominators in the partial quotients, it is possible to define algorithms for $\mathfrak P$-adic continued fractions satisfying the finiteness property on $K$ for every prime ideal $\mathfrak P$ of sufficiently large norm. This provides, in particular, a new algorithmic approach to the construction of division chains in number fields.

\end{abstract}

\section{Introduction}
Let $K$ be a field. A (finite) continued fraction in $K$ is an expression of the form 
\[[a_0,a_1,\ldots,a_n]=
{{a_0 + \cfrac{1}{{a_1 +  \cfrac{1}{{\ddots + \cfrac{1}{{a_{n}}}}}}}}}, \hbox{ with } a_1,\ldots, a_n\in K.
\]
The elements $a_0,a_1,\ldots$ are called the \emph{partial quotients} of the continued fraction.\\
When $K$ is endowed with a topology,  one can also define infinite continued fractions as
\[[a_0,a_1,\ldots,a_n,\ldots]=\lim_{n\to\infty} [a_0,a_1,\ldots,a_n],\]
provided that this limit exists in the completion of $K$.
Continued fractions are a classical and fascinating topic which has been investigated in many branches of mathematics and beyond, ranging from Diophantine approximation~\cite{AndreescuAndrica:CFDioph} to the study of astronomy~\cite[§4.1]{rockett1992continued} and the tuning of musical instruments~\cite{dunneMcConnes:pianosCF}.\par

An approach to generating continued fraction expansions involves an iterative procedure that employs a \emph{floor function} $s:K\to K$. Given $\alpha \in K$, a typical algorithm  works as follows: 
\begin{equation}
    \label{eqn:CFalg}
    \begin{cases} \alpha_0 =\alpha\\
a_n =s(\alpha_n) \\
    \alpha_{n+1} = \frac 1 {\alpha_n-a_n} \quad \textrm{if } \alpha_n-a_n \neq 0,\end{cases} 
\end{equation}
and it stops if $\alpha_n=a_n$.
In this case, $a_n$ only depends on $\alpha_n$. However, there are examples in the literature of more complex algorithms taking into account also previous steps of the computation (see for example the second algorithm proposed in \cite{Browkin2000}). 

In the classical case, $K$ is the real field $\RR$ and the floor function is nothing else than the integral part of a real number. For classical real continued fractions there are many results that relate algebraic properties of real numbers to the form of their continued fraction expansion. For example, the Euclidean algorithm shows that rational numbers correspond to finite continued fractions, and Lagrange's theorem characterizes quadratic irrationals as those real numbers having a periodic continued fraction expansion.

Since the 1970s, several different definitions of $p$-adic  continued fractions  were proposed, for example in \cite{Ruban1970, Schneider1970, Browkin1978, Browkin2000}, and many authors studied the finiteness and periodicity properties in the $p$-adic setting which seem to be very different with respect to the real case (see for example \cite{Bedocchi1990, ooto2017:padicTrasc, CapuanoVenezianoZannier2019, CapuanoMurruTerracini2020, BarberoCerrutiMurru2021, MurruRomeo2023}).
\medskip

Recently, in \cite{CapuanoMurruTerracini2022}, the authors proposed a unifying approach for continued fractions over the non-Archimedean completions of a number field $K$, based on the notion of \emph{type}. 

A type is a triple $(K,\mathfrak{P},s)$ where $K$ is a number field, $\mathfrak{P}$ is a prime ideal of $K$,  and $s$ is a {$\mathfrak{P}$-adic floor function} for $K$, that is a function
$s$ on $K_{\mathfrak{P}}$ (the $\mathfrak{P}$-adic completion of $ K$) satisfying analogous properties of the classical real floor function. {More precisely, following \cite[Definition 3.1]{CapuanoMurruTerracini2022}, we say that a \emph{$\mathfrak{P}$-adic floor function} for $K$ is a function 
$s : K_{\mathfrak{P}} \to K$ such that
\begin{enumerate}[label=\textnormal{(\alph*)}]
    \item $|\alpha - s(\alpha)|_{\mathfrak{P}} < 1$ for every $\alpha \in K_{\mathfrak{P}}$;
    \item $|s(\alpha)|_{v} \le 1$ for every non-archimedean place $|\cdot|_v$ of $K$ such that $v \neq \mathfrak{P}$;
    \item $s(0) = 0$;
    \item $s(\alpha) = s(\beta)$ if $|\alpha - \beta|_{\mathfrak{P}} < 1$
\end{enumerate}
where $|\cdot|_{\mathfrak{P}}$ denotes the absolute value attached to the ideal ${\mathfrak{P}}$.} In particular, $s$ takes values which are integral \emph{outside}~$\mathfrak{P}$.

Each type determines an algorithm of the form~\eqref{eqn:CFalg}, which produces a $\mathfrak{P}$-adically convergent continued fraction for every input $\alpha\in K_{\mathfrak{P}}$. Obviously,  if the algorithm stops then $\alpha\in K$, but the converse is not necessarily true. One says that $K$ has the $\mathfrak{P}$-adic $\CFF$ (\emph{Continued Fraction Finiteness}) property when there exists a type $\tau=(K,\mathfrak{P},s)$ such that the corresponding algorithm stops for any given input $\alpha\in K$.

 In \cite[Theorem 4.5]{CapuanoMurruTerracini2022} the authors give a criterion for a type $\tau$ to satisfy the  $\mathfrak{P}$-adic CFF property.  They use this to show that, for instance, when $K$ has Euclidean minimum strictly less than $1$, then $K$ has the $\mathfrak{P}$-adic $\CFF$  property for almost all primes $\mathfrak{P}$ and, more generally, for all primes belonging to a norm Euclidean class of ideals (see \cite[Theorem 5.6 and Theorem 7.4]{CapuanoMurruTerracini2022}).
 
 The criterion in \cite{CapuanoMurruTerracini2022} involves the boundedness of some quantity $\nu_{\tau}$ attached to the field $K$ and to the chosen type. This is, in general, not easy to compute, but types $\tau$ with \emph{small} $\nu_{\tau}$ can be found under certain arithmetical conditions on $K$. On the other hand, it is known that the validity of the $\mathfrak{P}$-adic CFF property for a field $K$ implies certain conditions on the class group of $K$ (see Corollary \ref{cor: condizionesuffperCFF}). 
 
 In order to establish a generalized version of the $\mathfrak{P}$-adic CFF property for every number field, one needs to enlarge the set of values taken by the floor function, allowing new \emph{extraneous denominators}. This will be our framework.
 
We first generalize the definitions of floor functions and types
given in \cite{CapuanoMurruTerracini2022}  in order to include this new set of denominators $\calT$ (see Definitions \ref{def-btff} and \ref{def-type}): this gives rise to the new notions of $(\mathfrak{P},\mathcal{T})$-CFF properties for types and fields (see Definition~\ref{CFF-type-fields}). 

We now describe the main contributions of our article.  Firstly, in Theorem \ref{teo:FCcondition}, we establish a criterion for a type to satisfy the  $(\mathfrak{P},\mathcal{T})$-CFF property, which is a generalization of \cite[Theorem 4.5]{CapuanoMurruTerracini2022}. In particular we prove that a certain bound on some quantity $\nu_{\tau}$ (depending on the type $\tau$) gives a bound on the Weil height of the complete quotients appearing in the CF-expansion, which in turn implies either finiteness or periodicity. This criterion serves then as the key ingredient to prove our main result, which is the following:
\begin{theorem}
Let $K$ be a number field. There exists a finite set $\calT$ such that the field $K$ satisfies the $(\mathfrak{P},\calT)$-adic $\CFF$-property for all prime ideals $\mathfrak{P}$ outside a finite set $\mathcal{P}_0$. 
Moreover, the sets $\calT$ and $\mathcal{P}_0$ can be explicitly determined in terms of $K$.
\end{theorem}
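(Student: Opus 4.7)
The plan is to apply the generalized criterion from Theorem \ref{teo:FCcondition} with a carefully chosen type $\tau = (K, \mathfrak{P}, s)$ whose set of extraneous denominators $\calT$ is tailored to overcome the class group obstruction identified in Corollary \ref{cor: condizionesuffperCFF}. The key insight is that, while the classical $\mathfrak{P}$-adic CFF property is obstructed by the nontriviality of certain ideal classes of $K$, allowing the floor function $s$ to take values whose denominators are supported on a fixed finite set of primes effectively replaces the ring of $\mathfrak{P}$-integers by a larger ring of $S$-integers in which the problematic classes become principal.

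First, I would choose $\calT$ so that its supporting primes form a system whose classes generate the finite group $\Cl(K)$. I would then construct, for every $\mathfrak{P}$ outside a suitable exceptional set $\mathcal{P}_0$ containing the primes dividing $\calT$ and the primes of small norm, a $(\mathfrak{P},\calT)$-adic floor function $s$ by fixing once and for all a set of coset representatives for the residue field $\calo_K/\mathfrak{P}$, chosen to lie in $K$ with Weil height bounded by an explicit constant $C = C(K,\calT)$ depending only on $K$ and $\calT$. Such a bounded family can be produced by a Minkowski-type argument applied to a fundamental domain for the lattice of $\calT$-integers inside $\prod_{v \in S_\infty \cup \calT} K_v$; it is precisely the freedom of using denominators in $\calT$ that makes this possible without any further arithmetical hypothesis on $K$.

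Next, I would verify the hypothesis of Theorem \ref{teo:FCcondition} by bounding $\nu_\tau$ in terms of the height constant $C$: for $N(\mathfrak{P})$ sufficiently large with respect to $C$, the inequality required by the criterion holds, yielding a uniform bound on the Weil height of all complete quotients $\alpha_n$ produced by the algorithm on any input $\alpha \in K$. Since these complete quotients lie in $K$ and have bounded height, Northcott's theorem implies that only finitely many distinct values can occur, so the sequence is either finite or eventually periodic.

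The main obstacle, in my view, is ruling out the eventually periodic case and thereby upgrading this dichotomy to genuine finiteness. I expect this to follow from a strict monotonicity property of the $\mathfrak{P}$-adic valuations along the algorithm: since $s(\alpha_n)$ agrees with $\alpha_n$ modulo positive powers of $\mathfrak{P}$, each inversion step strictly increases the $\mathfrak{P}$-adic size of the complete quotient, which is incompatible with a nontrivial cycle. Finally, making both $\calT$ and $\mathcal{P}_0$ explicit amounts to tracking the dependence on the chosen class group generators, on the constant $C$, and on the threshold for $N(\mathfrak{P})$ through the proof of the criterion; this is intensive bookkeeping but routine given the effective nature of Theorem \ref{teo:FCcondition}.
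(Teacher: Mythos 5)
Your high-level outline correctly identifies the criterion of Theorem~\ref{teo:FCcondition} as the engine of the proof and correctly anticipates the role of Northcott's theorem, but two central steps do not hold up.

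The first gap is the claim that coset representatives for $\calo_K/\mathfrak{P}$ can be chosen in $K$ with Weil height bounded by a constant $C=C(K,\calT)$ independent of $\mathfrak{P}$. This is impossible: the floor function must assign $\mathcal{N}(\mathfrak{P})-1$ pairwise distinct nonzero values (one per nonzero residue class), and by Northcott's theorem (Proposition~\ref{prop-H}\ref{NT}) the set of elements of $K$ of height at most $C$ is finite, so for $\mathcal{N}(\mathfrak{P})$ large enough such a system cannot exist. Already for $K=\QQ$ and $\calT=\{1\}$ the representatives $\{1,\dots,p-1\}$ or $\{-\lfloor p/2\rfloor,\dots,\lfloor p/2\rfloor\}$ have heights growing with $p$. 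What the paper's construction actually does is allow the Archimedean absolute values of $s(\eta)$ to grow roughly like $\mathcal{N}(\mathfrak{P})^{1/d}$, while ensuring two more delicate inequalities: the \emph{norm} $|N_{K/\QQ}(s(\eta))|$ is bounded by $\epsilon^d \mathcal{N}(\mathfrak{P})/j^d$ with $\epsilon<1$ (via the effective Hurwitz-type Theorem~\ref{teo:Hurwitz3} and Corollary~\ref{cor:idealeintero}), and each individual $|\sigma(s(\eta))|$ is bounded by $T_0\mathcal{N}(\mathfrak{P})^{1/d}\epsilon/j$ (via Lemma~\ref{lem:effective-5.5}, which uses the covering radius $\rho_\infty(K)$ of the unit lattice to pick a good generator $\gamma_\mathfrak{P}$). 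It is the interplay of these two bounds, fed through the $\theta$-product in $\nu_\tau$, that makes $\nu_\tau<1$ after multiplying by the non-Archimedean factor $|N_{K/\QQ}(j)|=j^d$; a uniform height bound would both be false and fail to capture the required cancellation between the norm bound and the $\mathfrak{P}$-adic factor.

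The second gap is the proposed argument for passing from ``finite or eventually periodic'' to finiteness. You suggest that each inversion step strictly increases the $\mathfrak{P}$-adic size of the complete quotients and that this is incompatible with a cycle. But there is no such monotonicity: from $\alpha_{n+1}=1/(\alpha_n-a_n)$ with $\alpha_n-a_n\in\mathfrak{P}$ one only gets $|\alpha_{n+1}|_{w_0}>1$, and if $v_\mathfrak{P}(\alpha_n-a_n)=1$ at every step then $|\alpha_n|_{w_0}$ is constant; periodic $\mathfrak{P}$-adic continued fractions do occur. In the paper, finiteness (as opposed to mere preperiodicity) is extracted from the strict inequality $\nu_\tau<1$ via the height estimate $H(\alpha_{n+1})^d\le C\,\nu_\tau^n$ proved in Theorem~\ref{teo:FCcondition}: the right-hand side tends to $0$, which is incompatible with $H(\alpha_{n+1})\ge 1$ unless the algorithm terminates. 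This is an archimedean/global height argument, not a $\mathfrak{P}$-adic valuation argument. To fix your outline you would need to replace both the bounded-height construction with the covering-radius-plus-Hurwitz machinery of Section~\ref{sec:CFF} and the valuation-monotonicity step with the strict height decay from the criterion.
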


This result is an immediate consequence of Theorem \ref{teo:fullresult-explicit}, where the sets $\calT$ and $\mathcal{P}_0$ are explicitly determined and their cardinality depends in particular on the degree and the discriminant of the field $K$ and on the covering radius of the image of the group of units $\mathcal O_K^{\times}$ via the logarithmic embedding. Using this, we are able to compute the involved constants in some explicit cases: one of them is the field $K=\QQ(\sqrt{14})$, i.e.\ the real quadratic field with the smallest discriminant which is Euclidean but not norm Euclidean. For $\QQ(\sqrt{14})$ we also show how it is possible to decrease the size of the set of denominators $\calT$ needed to ensure the finiteness result, at the cost of a larger set $\mathcal{P}_0$. This strategy could also be applied to larger classes of fields; we provide a table of examples of non-CM fields with degree $\ge 3$ over $\QQ$ and having rank of unity strictly larger than $1$ for which we compute the explicit constants of Theorem \ref{teo:fullresult-explicit}.

In the final part of the paper we compare the floor-function-based approach to continued fraction expansions with a different one, based on division chains. It is easy to see that, if a pair $a,b \in K$ has a terminating division chain of length $k$ with coefficients in a certain subring $R$ of $K$, then $a/b$ has a continued fraction expansion with partial quotients in $R$ of the same length, even if it does not come from any floor function $s:K \rightarrow K$. The existence of a finite division chain with coefficients in $R$ for every pair of elements of $K$ is also related to the fact that the group $\mathrm{SL}_2(R)$ can be generated by elementary matrices (see \cite{OMeara1965, Cohn1966, CossuZanardoZannieri2018}).
In this setting, a result of Morgan, Rapinchuck and Sury~\cite{morganEtAl:boundedGenSL2} ensures that, if $R=\calo_S$ (i.e.\ the ring of $S$-integers of $K$, where $S$ is a finite set of places) has infinitely many units, then $a/b$ has a CF-expansion of length at most $5$. 
Although this approach leads to a uniform bound on the lengths of the CF-expansions of every element in a fixed number field $K$, to the best of our knowledge there is no efficient algorithm to compute the division chains. Moreover, the continued fractions obtained by this approach do not satisfy the convergence conditions ensured by the floor-function-based approach. For these reasons we believe that floor-function-based continued fractions are worth considering.
\medskip

The paper is organized as follows. After recalling the classical properties of heights, embeddings of number fields, lattices and covering radii, in Section~\ref{sec:tFloor} we introduce the new notion of $\mathfrak P$-\textit{adic} $\calT$-\textit{floor function} for $K$, which is a natural generalization of the $\mathfrak P$-\textit{adic floor function} from~\cite{CapuanoMurruTerracini2022} -- obtained by introducing more denominators -- and the associated notions of type and CFF and CFP-properties. Section~\ref{sec:criterion} is devoted to proving a criterion for a type to satisfy the $(\mathfrak P, \calT)$-CFF and CFP properties (see Theorem \ref{teo:FCcondition}). This criterion, together with certain explicit bounds involving the covering radius of the image of the group of units via the logarithmic embedding, allows us to prove Theorem \ref{teo:fullresult-explicit}, which provides an explicit way of constructing sets $\calT$ for which the $(\mathfrak P, \calT)$-adic CFF property holds for all ideals of sufficiently large norm. In Section~\ref{sec:examples} we give some examples of computations of the constants involved in Theorem \ref{teo:fullresult-explicit} and we describe a possible strategy to decrease the size of the set of denominators used in the CF-expansions. Finally, in Section~\ref{sec:related} we analyse the relation between continued fractions and division chains, making a comparison between floor-function-based continued fractions and those obtained using division chains.

\section{Notation and preliminaries on valuations and heights}
\subsection{Absolute values, valuations and the Weil height}
In this article all number fields are assumed to be subfields of a fixed, once and for all, algebraic closure $\overline{\mathbb{Q}}$ of $\mathbb{Q}$.
Given a number field $K$, we denote by $\calo_K$ its ring of integers and by $\mathrm{Cl}(K)$ the ideal class group of $K$. We also denote by $\calM_K$ the set of places of $K$ and by $\calM_K^0$ the subset of the non-Archimedean ones.

Given a place $w\in \calM_K$, we denote by $K_w$ the completion of $K$ with respect to the $w$-adic valuation and  by $|\cdot|_w$ the corresponding absolute value. 
If $w\in \calM_K^0$ and $\mathfrak{P}\subset \calo_K$ is the corresponding prime ideal, we  usually write $K_{\mathfrak{P}}$ instead of $K_w$ and we denote by $\calo_{\mathfrak{P}}$ its ring of integers. 
With a slight abuse of notation, we shall also denote by ${\mathfrak{P}}$ the extended ideal in $\calo_{\mathfrak{P}}$. If $K=\mathbb{Q}$, then $|\cdot|_v$ is chosen so that $|p|_v=1/p$ if $v\in \calM_{\mathbb{Q}}^0$ corresponds to the prime number $p$, while $|\cdot|_v$ is either the (usual) real or complex absolute value if $v\in \calM_{\mathbb{Q}}\setminus \calM_{\mathbb{Q}}^0$. 

For  a general number field $K$ and $w\in\calM_K$, the normalization of $|\cdot|_w$ is chosen so that, for every $x\in K^\times$, the product formula \[
\prod_{w\in \calM_K}|x|_w^{d_w}=1
\]
holds, where $d_w=[K_w:\mathbb{Q}_v]$ is the local degree of $K$ at $w$ and $v\in\calM_{\mathbb{Q}}$ is such that $w\mid v$. More precisely, given $v\in \calM_\mathbb{Q}$, we have that $|x|_w=|N^{K_w}_{\mathbb{Q}_v}(x)|_v^{1/d_w}$ is the unique extension of $|\cdot|_v$ to $K_w$.

Recall that, for $x\in \overline{\mathbb{Q}}$, its absolute Weil height is the non-negative real \[
H(x)=\prod_{w\in \calM_K} \sup(1,|x|_w),
\] where $K$ is any number field containing $x$. This function satisfies several important properties as the following proposition shows.
\begin{proposition}\label{prop-H} The function $H$ satisfies the following:
\begin{enumerate}[label=(\roman*)]
\item\label{KT}
{\bf (Kronecker's theorem)} $H(x)=1$ if and only if $x$ is either 0 or a root of unity in $\overline{\mathbb{Q}}$.
\item\label{NT}
{\bf (Northcott's theorem)} For any integer $d\geq 1$ and any real $B\geq 0$ the set of all algebraic numbers of degree at most $d$ and height at most $B$ is finite and can be effectively determined.
\end{enumerate}
\end{proposition}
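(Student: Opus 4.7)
The plan is to prove both parts via the classical link between the Weil height and the Mahler measure of the minimal polynomial. I would handle Kronecker's theorem first. For the ``if'' direction, if $x=0$ then $H(x)=1$ by the convention that $\sup(1,|0|_w)=1$, and if $x$ is an $n$-th root of unity then $|x|_w^n = |x^n|_w = |1|_w = 1$ at every place, forcing $|x|_w=1$ and hence $\sup(1,|x|_w)=1$ throughout. For the converse, assume $H(x)=1$, so that $|x|_w \leq 1$ at every place of any number field $K$ containing $x$. The non-archimedean bounds imply that $x$ is an algebraic integer, so its minimal polynomial over $\QQ$ is monic in $\ZZ[X]$; the archimedean bounds imply that every Galois conjugate of $x$ lies in the closed unit disc of $\CC$. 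Consequently the $k$-th coefficient of this minimal polynomial is bounded in absolute value by $\binom{d}{k}$, where $d = \deg x$. The same reasoning applies to each power $x^n$, since $H(x^n) = H(x)^n = 1$ and $\deg x^n \leq d$, so there are only finitely many possible minimal polynomials among the $x^n$. A pigeonhole argument then gives $x^m = x^n$ for some $m > n$, whence $x$ is a root of unity.

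For Northcott's theorem, I would use the identity $M(f) = H(x)^{\deg x}$ relating the Weil height of $x$ to the Mahler measure $M(f) = |a_0| \prod_{i}\max(1,|x_i|)$ of its primitive integral minimal polynomial $f(X) = a_0 \prod_{i=1}^{\deg x}(X - x_i) \in \ZZ[X]$. Combined with the elementary bound $|f_k| \leq \binom{\deg x}{k} M(f)$ on the coefficients of $f$, this yields $|f_k| \leq \binom{d}{k} B^d$ for every $x$ with $\deg x \leq d$ and $H(x) \leq B$. Since there are only finitely many polynomials in $\ZZ[X]$ with bounded degree and bounded coefficients, there are only finitely many possible minimal polynomials, and their roots form the desired finite set. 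All the bounds being explicit, the set can be effectively enumerated.

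The main obstacle is establishing the identity $M(f) = H(x)^{\deg x}$: one must decompose $H(x)$ into a product over places of $\QQ$, using the normalization $|x|_w = |N^{K_w}_{\QQ_v}(x)|_v^{1/d_w}$, and check that the non-archimedean contribution assembles into $|a_0|$ while the archimedean contribution assembles into $\prod_i \max(1,|x_i|)$. Once this identity is in hand, both parts follow cleanly. In the final write-up, rather than reproving these classical facts, I would simply cite standard references such as Bombieri--Gubler's \emph{Heights in Diophantine Geometry} (Theorems~1.5.9 and~1.6.8) or Hindry--Silverman's \emph{Diophantine Geometry}.
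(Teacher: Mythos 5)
Your proposal is correct and ends up in exactly the same place as the paper, which offers no proof of Proposition~\ref{prop-H} at all but simply cites \cite[Theorems 1.5.9 and 1.6.8]{BombieriGubler2006} — the very reference you name at the close of your write-up. The Mahler-measure sketch you give is the standard argument behind those theorems (in the Kronecker step, note that the pigeonhole is applied to the set $\{x^n : n\geq 1\}$, which is finite because it sits inside the union of the root sets of the finitely many admissible minimal polynomials, and this does directly yield $x^m=x^n$ with $m>n$), so there is no discrepancy to flag.
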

We refer to \cite[Theorems 1.5.9 and 1.6.8]{BombieriGubler2006} for a proof of these results and for more details on height functions.

\subsection{Embeddings}\label{sec-embeddings}
 We recall some notation and definitions from \cite[\S 5]{CapuanoMurruTerracini2022}. Given a number field $K$, we denote its signature by $(r_1,r_2)$ and its degree by $d=r_1+2 r_2$. Then $K$ has $r_1$ real embeddings $\sigma_1,\ldots,\sigma_{r_1}$ and $r_2$ pairs of complex conjugated embeddings $(\tau_1,\bar{\tau}_1),\ldots,(\tau_{r_2},\bar{\tau}_{r_2})$. We let  \[\iota\colon K\hookrightarrow \mathbb{R}^{r_1}\times \mathbb{C}^{r_2}\] be the \emph{Euclidean  embedding} defined as \[\iota(x)=(\sigma_1(x),\ldots,\sigma_{r_1}(x),\tau_1(x),\ldots,\tau_{r_2}(x)).\] 

We will also consider the embedding $\lambda \colon K\hookrightarrow \RR^d$ obtained by composing $\iota$ with the isomorphism of real vector spaces 
\begin{align*}
    f\colon \RR^{r_1}\times \CC^{r_2}&\rightarrow \RR^d\\
    (x_1,\ldots,x_{r_1},y_1,\ldots,y_{r_2}) & \mapsto (x_1,\ldots,x_{r_1},y_1,\bar{y}_1,\ldots,y_{r_2},\bar{y}_{r_2}).
\end{align*}
 
  We let $N:\RR^{r_1}\times \CC^{r_2}\rightarrow \RR$ be the \emph{norm map} defined as \[N(x_1,\ldots,x_{r_1},z_1,\ldots,z_{r_2})=x_1\cdots x_{r_1}z_1\bar{z}_1\cdots z_{r_2}\bar{z}_{r_2}\] where $\bar{z}$ denotes as usual the complex conjugate of a complex number $z$.
Notice that $N$ is the extension to $\RR^{r_1}\times \CC^{r_2}$ of the field norm on $K$, i.e.\ $N(\iota(x))=N_{K/\mathbb{Q}}(x)$.

 We also denote  by \[\ell\colon K^\times \hookrightarrow \RR^{r_1+r_2}\] the \emph{logarithmic embedding} defined as \[\ell(x)=(\log|\sigma_1(x)|,\ldots,\log|\sigma_{r_1}(x)|,2\log|\tau_1(x)|,\ldots,2\log|\tau_{r_2}(x)|)\] for every $x\in K^\times$, where  $|z|=(z\bar{z})^{1/2}$ denotes the usual complex absolute value of a complex number $z$.

 \subsection{Lattices and covering radius}\label{ss:lattices}
We further need to recall some terminology from lattice theory. 
Let $\Lambda$ be a lattice in $\RR^n$ and, for a real number $p\in [1,\infty)\cup\{\infty\}$, let $|| \cdot||_p$ be the $L_p$ norm on $\RR^n$. The \emph{distance function} relatively to $p$ is by definition
    $$\rho_p(\vv,\Lambda)=\min_{\ww\in\Lambda} ||\vv-\ww||_p.$$
    The \emph{covering radius} of $\Lambda$ with respect to $|| \cdot||_p$ is the number 
    $$\rho_p(\Lambda)=\sup_{\vv\in \mathrm{Span}(\Lambda)} \rho_p(\vv,\Lambda).$$
    Equivalently, it is the smallest number $\rho$, such
that closed balls of radius $\rho$ (with respect to the $L_p$ norm) centered on all lattice points in $\Lambda$ cover the entire space $\mathrm{Span}(\Lambda)$. 

\begin{remark}[Naive upper bound for the covering radius]
\label{rem:naiveCoveringRadius}
    Let $\Lambda \subseteq \RR^n$ be a lattice of rank $r$ with basis $\ee_1,\ldots,\ee_r$, with $\ee_i=(e_{i,1},\ldots, e_{i,n})$. Then, the set
    $$\mathcal{D}=\left \{\sum_{i=1}^r a_i\ee_i\ |\ |a_i|\leq \frac 1 2\right\} $$
    is a translate of the fundamental parallelepiped of $\Lambda$.  Therefore 
    $$\rho_\infty(\Lambda)\leq \sup_{x\in \mathcal{D}}\left(||x||_\infty\right) \leq \frac 1 2 \sum_{i=1}^r ||\ee_i||_\infty= \frac 1 2 \sum_{i=1}^r\max_{j} |e_{i,j}|, $$
    where $|\cdot|$ is the usual real absolute value.
\end{remark}

     Given a number field $K$, we define $\Lambda_K=\ell(\calo_K^\times)$ to be the image of the group of units in $\calo_K$ via the logarithmic embedding. Then, $\Lambda_K$ is a lattice in $\RR^{r_1+r_2}$ and, for  $p\in [1,\infty)\cup\{\infty\}$,  we denote its covering radius by $\rho_p(K)=\rho_p(\Lambda_K)$. 

Notice that $\mathrm{Span}(\Lambda_K)$ is the hyperplane defined by  
\[\{(x_1,\ldots,x_{r_1},y_1,\ldots,y_{r_2})\in \RR^{r_1+r_2}\mid x_1+...+x_{r_1}+y_1+...+y_{r_2}=0\}.\]

\section{\texorpdfstring{$\mathcal{T}$}{T}-floor functions and types}\label{sec:tFloor}
Given a finite set $S$ of places in $K$, the \emph{ring of $S$-integers} is
\begin{equation*}
    \mathcal{O}_{S}=\{\alpha \in K \mid v_{\mathfrak{Q}}(\alpha)\geq 0\; \text{for each}\; \mathfrak{Q} \notin S\}.
\end{equation*}
By definition, $\mathcal{O}_{S}$ consists of the elements of $K$ which are integers \emph{outside} the places in $S$. We will be particularly interested in the case $S=\{\mathfrak{P}\}$ where $\mathfrak{P}$ is a prime over $p$.
\par The following definition is modeled upon \cite[Definition 3.1] 
{CapuanoMurruTerracini2022} and provides a generalization of it.
\begin{definition}\label{def-btff}
Let $K$ be a number field, $\calT$ a finite subset of $\calo_K\setminus\{0\}$ and $\mathfrak{P}$ a prime ideal of $\calo_K$.
A  \emph{$\mathfrak{P}$-adic $\calT$-floor function for $K$} is any function $s \colon K_{\mathfrak{P}} \rightarrow K$ such that
\begin{enumerate}[label=(\roman*)]
    \item\label{item-i} $\alpha - s(\alpha)\in\mathfrak{P}$ for every $\alpha \in K_{\mathfrak{P}}$;
\item\label{item-ii} for every $\alpha\in K_\mathfrak{P}$, there exists $t\in \calT$ such that $ts(\alpha)\in \calo_{\{\mathfrak{P}\}}$;
\item\label{item-iii} $s(0)=0$;
    \item\label{item-iv} if $\alpha - \beta \in \mathfrak{P}$, then $s(\alpha) = s(\beta)$.
\end{enumerate}
\end{definition}
{The only difference with \cite[Definition~3.1]{CapuanoMurruTerracini2022} lies in condition~(ii). In \cite{CapuanoMurruTerracini2022}, this condition was stated as $s(\alpha)\in \calo_{\{\mathfrak{P}\}}$; this is precisely the special case of our formulation obtained by taking $\calT = \{1\}$.
}

As a natural generalization of \cite{CapuanoMurruTerracini2022} we also have:
\begin{definition}\label{def-type}
    A \emph{type} is a quadruple $\tau=(K,\mathfrak{P}, \calT,s)$ where $K$ is a number field, $\calT\subset \calo_K\setminus\{0\}$ is a finite subset, $\mathfrak{P}\subset \calo_K$ is a prime ideal and $s:K_{\mathfrak{P}} \rightarrow K$ is a $\mathfrak{P}$-adic $\calT$-floor function for $K$.
\end{definition}

\subsection{Continued fraction expansion associated with a type}
In this short section we recall some definitions and results from \cite[\S 3.3]{CapuanoMurruTerracini2022} on continued fraction expansions associated to types. 

Let $\tau=(K,\mathfrak{P},\calT, s)$ be a type. We denote by $w_0$ the place of $K$ corresponding to the prime $\mathfrak{P}$ and by $|\cdot|_{w_0}$ the corresponding absolute value.

As in \cite[Definition 3.5]{CapuanoMurruTerracini2022}, for a given type $\tau=(K,\mathfrak{P},\calT, s)$ we say that a \emph{continued fraction of type $\tau$} is a
(possibly infinite) sequence
$[a_0, a_1, \ldots]$ of elements of $s(K_{\mathfrak{P}})$ such that   $|a_n|_{w_0}> 1$ for every $n\geq 1$.
\begin{remark}\label{prop-an-Vn}
As remarked in \cite[\S 3.3]{CapuanoMurruTerracini2022} the following facts hold true:
\begin{enumerate}
    \item The sequence of \emph{$n$-th convergents}
    \[Q_n=
{{a_0 + \cfrac{1}{{a_1 +  \cfrac{1}{{\ddots + \cfrac{1}{{a_{n}}}}}}}}}
\] of a continued fraction $[a_0,a_1,\ldots]$ converges $\mathfrak{P}$-adically.
\item\label{def-alphan} Conversely, every $\alpha\in K_{\mathfrak{P}}$ is the limit of the sequence of convergents of the continued fraction $[a_0,a_1,\ldots]$ given by $a_n=s(\alpha_n)$, where the sequence $(\alpha_n)_n$ is defined recursively as $\alpha_0=\alpha$ and  $\alpha_{n+1}=1/(\alpha_n-s(\alpha_n))$ if $\alpha_n\neq s(\alpha_n)$ (otherwise the algorithm stops at index $n$). {For a fixed type 
$\tau$, the sequence $(a_0,a_1,\ldots)$ obtained in this way is uniquely determined. We call $[a_0,a_1,\ldots]$ the \emph{continued fraction expansion of type~$\tau$ for~$\alpha$}.}
\item\label{def_Vn} Given the continued fraction expansion $[a_0,a_1,\ldots]$ of type $\tau$ for $\alpha$, for $n\geq -1$ consider the sequence $(V_n)_n$ of elements of $K$ defined  as $V_{-1}=1$, $V_{0}=a_0-\alpha$ and $V_{n+1}=a_{n+1}V_n+V_{n-1}$. The sequence $(V_n)_n$ satisfies many interesting properties, collected for instance in \cite[Proposition 3.6]{CapuanoMurruTerracini2022}. 
 \end{enumerate}
\end{remark}
We also recall the following definitions (see \cite[Definition 4.1]{CapuanoMurruTerracini2022}):
\begin{definition}\label{CFF-type-fields}
Given a type $\tau=(K,\mathfrak{P},\calT, s)$, we say that \emph{$\tau$ satisfies the 
CFF (Continued Fraction Finiteness) property (}respectively, \emph{CFP (Continued Fraction Periodicity) property)} if every $\alpha\in K$ has a finite (respectively, finite or periodic) expansion of type $\tau$.

We say that the field \emph{$K$ satisfies the $(\mathfrak{P},\calT)$-adic CFF (respectively, CFP)
property}   if there is a type $\tau=(K,\mathfrak{P},\calT, s)$ satisfying the CFF (respectively, CFP) property.
\end{definition}

\section{A criterion for the CFF and CFP properties for types}\label{sec:criterion}
As in \cite[\S 4.1]{CapuanoMurruTerracini2022}, for $x\in\mathbb{C}$ we define 
\begin{equation}\label{def-theta}
\theta(x)= \frac{1}{2}
\left(|x|+\sqrt{|x|^2+4}\right)
\end{equation}
where $|\cdot|$ denotes the usual complex absolute value.
It is easy to see that 
\begin{equation}\label{disug-theta}
    |x|\leq \theta(x)\leq |x|+1.
\end{equation}
The following generalization of \cite[Theorem 4.5]{CapuanoMurruTerracini2022} provides a sufficient condition for a type $\tau$ to satisfy the CFP and CFF properties.
\begin{theorem}\label{teo:FCcondition}
Let $\tau=(K,\mathfrak{P},\calT, s)$ be a type. 
Denote by $w_0\in \mathcal{M}_K^0$  the place corresponding to the prime $\mathfrak{P}$. Let $\Sigma$  be the set of embeddings of $K$ in $\CC$  and set 
\[
\nu_\tau=\sup\left\{ {|a|_{w_0}^{-d_{w_0}}}{\prod_{\sigma\in\Sigma}\theta(\sigma(a))\prod_{w\in \mathcal{M}_K^0\setminus\{w_0\}} \max(|a|_w,1)^{d_w}}  \;\middle |\;  a\in s(K_\mathfrak{P}), |a|_{w_0}>1\right\}. 
\]
Then
\begin{enumerate}
    \item[(a)]\label{point-a} if $\nu_{\tau}\leq 1$, then $\tau$ satisfies the CFP property;
    \item[(b)]\label{point-b} if $\nu_{\tau}< 1$, then $\tau$ satisfies the CFF property. 
\end{enumerate}
More precisely, in both cases, for every $\alpha\in K$ the continued fraction expansion of type $\tau$ of $\alpha$ has either length or period length bounded by \[C_{\alpha}=d\left(2^{d+1}\left\lceil\sqrt{
|s(\alpha)-\alpha|^2+1}\prod_{w\in \calM_K^0\setminus \{w_0\}}\sup(|s(\alpha)-\alpha|_w,1) \right\rceil +1\right)^{d+1},
\] 
where $d=[K:\mathbb{Q}]$.
\end{theorem}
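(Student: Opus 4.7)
The plan is to generalize the proof of \cite[Theorem 4.5]{CapuanoMurruTerracini2022}, which treats the special case $\calT=\{1\}$, to our broader setting where $s$ is allowed to take values with denominators in $\calT$ at places $w\ne w_0$. The strategy is to track the Weil height of the complete quotients $\alpha_n$ along the CF-expansion and show that the hypothesis on $\nu_\tau$ forces this height to stay bounded (respectively, to decay), so that Northcott's theorem (Proposition \ref{prop-H}\ref{NT}) yields periodicity (respectively, finiteness).

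First I would identify $\alpha_{n+1}$ in terms of the sequence $(V_n)_n$ from Remark \ref{prop-an-Vn}. Starting from the standard convergent identity $\alpha=(p_n\alpha_{n+1}+p_{n-1})/(q_n\alpha_{n+1}+q_{n-1})$ and comparing with the recursion of $(V_n)_n$, one checks that $\alpha_{n+1}=V_{n-1}/V_n$ whenever $V_n\ne 0$, so that bounding $H(\alpha_{n+1})$ reduces to bounding $H(V_{n-1})H(V_n)$. Next I would produce local estimates for $|V_n|_w$ at every place $w\in \calM_K$. At $w_0$, since $|a_j|_{w_0}>1$ for $j\ge 1$, the non-Archimedean triangle inequality forces $|V_n|_{w_0}=|V_0|_{w_0}\prod_{j=1}^n |a_j|_{w_0}^{-1}$. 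At each Archimedean place $w$, induction on $V_{n+1}=a_{n+1}V_n+V_{n-1}$ combined with the fact that $\theta(x)$ is the larger root of $X^2-|x|X-1=0$ yields $|V_n|_w\le \max(1,|V_0|_w)\prod_{j=1}^n \theta(\sigma_w(a_j))$. At each non-Archimedean $w\ne w_0$, the ultrametric inequality gives $|V_n|_w\le \max(1,|V_0|_w)\prod_{j=1}^n \max(|a_j|_w,1)$. The novelty compared with \cite{CapuanoMurruTerracini2022} is precisely at these last places: with $\calT\ne \{1\}$, the factors $\max(|a_j|_w,1)$ may exceed $1$ and must enter the bookkeeping, which is exactly why they appear in the definition of $\nu_\tau$.

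Combining these estimates with the product formula applied to $V_n\in K^\times$ and with the definition of $\nu_\tau$, I would obtain an estimate of the shape
\[
H(\alpha_{n+1})\le c(\alpha)\cdot \nu_\tau^{n}
\]
for an explicit constant $c(\alpha)$ depending only on $V_0=s(\alpha)-\alpha$ and on $d=[K:\QQ]$. If $\nu_\tau\le 1$, then $H(\alpha_n)$ is uniformly bounded; since all $\alpha_n$ belong to the fixed number field $K$, Proposition \ref{prop-H}\ref{NT} implies that $(\alpha_n)_n$ takes only finitely many values, forcing periodicity. If $\nu_\tau<1$, the bound decays to zero, so eventually $\alpha_n=a_n$ (equivalently $V_n=0$), and the algorithm terminates. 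The explicit length bound $C_\alpha$ follows by combining $c(\alpha)$ with an effective form of Northcott that counts elements of $K$ of bounded height, using \eqref{disug-theta} to replace the Archimedean factors by $\sqrt{|s(\alpha)-\alpha|^2+1}$-type quantities and substituting the $V_0$-dependence into the counting estimate; this is what produces the factor $2^{d+1}$ and the exponent $d+1$ appearing in the statement.

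The main obstacle is the bookkeeping in the third step: one must verify that the product of the local growth rates at places $w\ne w_0$ (Archimedean $\theta$-factors and non-Archimedean $\max$-factors), balanced against the decay $|a_j|_{w_0}^{-d_{w_0}}$ at $w_0$, telescopes precisely into $\nu_\tau^{n}$ without spurious cross-terms. A secondary, purely technical difficulty is making the constant $C_\alpha$ fully explicit, for which one needs a quantitative version of Northcott's theorem with the exact exponent $d+1$ and to carefully unwind the dependence on $V_0=s(\alpha)-\alpha$ in the initial-data factor of $c(\alpha)$.
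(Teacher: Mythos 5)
Your proposal is correct in strategy and follows essentially the same route as the paper's own proof: express $\alpha_{n+1}$ as $\pm V_{n-1}/V_n$, estimate $|V_n|_w$ place by place (using $\theta$-factors at Archimedean places, $\max(|a_j|_w,1)$-factors at finite $w\neq w_0$, and a telescoping identity at $w_0$), combine into a geometric bound $H(\alpha_{n+1})^d\le C\,\nu_\tau^n$, and invoke effective Northcott. Two small inaccuracies are worth flagging, as they would derail a literal write-up. First, the reduction to ``bounding $H(V_{n-1})H(V_n)$'' is the wrong intermediate step: the crude bound $H(V_{n-1}/V_n)\le H(V_{n-1})H(V_n)$ loses exactly the cancellation at $w_0$ that makes the argument work; one should instead use the product formula to rewrite $H(V_{n-1}/V_n)^d=\prod_w\sup(|V_n|_w,|V_{n-1}|_w)^{d_w}$, so that the very small factor $|V_{n-1}|_{w_0}^{d_{w_0}}$ appears and offsets the Archimedean growth. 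Second, the exact formula for $|V_n|_{w_0}$ does \emph{not} follow from the ultrametric inequality alone: in the recursion $V_{n+1}=a_{n+1}V_n+V_{n-1}$, the two summands have equal $w_0$-absolute value, so the ultrametric inequality only gives $|V_{n+1}|_{w_0}\le|V_{n-1}|_{w_0}$. The precise telescoping identity must instead be read off from the algebraic relation $V_n=(-1)^{n+1}\prod_{j\le n+1}\alpha_j^{-1}$ (Proposition 3.6(d),(g) of \cite{CapuanoMurruTerracini2022}), as the paper does.
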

\begin{remark} Before proving the result we notice that, if  $a\in s(K_\mathfrak{P})$ then, from Definition \ref{def-btff}.\ref{item-ii}, there exists $t\in \calT$ such that $|ta|_w\leq 1$  for every $w\in {M}_K^0\setminus\{w_0\}$. So the product 
$\prod_{w\in \mathcal{M}_K^0\setminus\{w_0\}}\max(|a|_w,1)$ is finite and bounded, independently of $a$, by the quantity $\prod_{t\in \calT}\prod_{w\in \mathcal{M}_K^0\setminus\{w_0\}}|t|_w^{-1}$.
\end{remark}
\begin{proof}
Let $\alpha\in K$, let $[a_0,a_1,\ldots]$ be its continued fraction expansion of type $\tau$ and  let $(\alpha_n)_n$ be the sequence defined in Remark \ref{prop-an-Vn}\eqref{def-alphan}. 
Let $H$ be the absolute Weil height on $\overline{\mathbb{Q}}$. We first want to show that, for every $n\geq 1$, one has \begin{equation}\label{boun-H-to-show}H(\alpha_{n+1})^d \leq C\,\nu_\tau^n,
\end{equation}
where $d=[K:\mathbb{Q}]$ and $C>0$ is a constant only depending on $\alpha$.\\
If $(V_n)_n$ is the sequence defined in Remark \ref{prop-an-Vn}\eqref{def_Vn}, then   $\alpha_{n+1}=-V_{n-1}/V_n$ for all $n\geq 1$ by~\cite[Proposition 3.6(b)]{CapuanoMurruTerracini2022}, and since $V_n\in K$ we have
\begin{equation}\label{H-bound}
    H(\alpha_{n+1})^d  = 
    H\left(-\frac{V_{n-1}}{V_n}\right)^d
    = \prod_{\mathclap{w\in \calM_K}}\sup\left( \left|\frac{V_{n-1}}{V_n}\right |_w^{d_w},1\right)
    =\prod_{\mathclap{w\in \calM_K}}\sup(|V_n|_w^{d_w},|V_{n-1}|_w^{d_w}).
\end{equation}
Let $w$ be an Archimedean place of $K$ and let  $\sigma\in\Sigma$ be the corresponding embedding. Notice that, for each such $\sigma$, the sequence $\sigma(V_n)$ satisfies the recurrence formula
$$\sigma(V_n)=\sigma(a_n) \sigma(V_{n-1}) +\sigma(V_{n-2}),$$
for every $n\ge 1$. Therefore, by \cite[Lemma 4.4]{CapuanoMurruTerracini2022} we have
\begin{equation} \label{eq:theta_est}
\prod_{w\in \calM_K\setminus\calM_K^0}\sup(|V_n|_w^{d_w},|V_{n-1}|_w^{d_w})=\prod_{\sigma\in\Sigma}\sup(|\sigma(V_n)|, |\sigma(V_{n-1})| )\leq C_\infty \prod_{j=1}^n \prod_{\sigma\in \Sigma} \theta(\sigma(a_j))
\end{equation}
where $C_\infty=\sqrt{
|a_0-\alpha|^2+1}$.

Let now $w\in \calM_K^{0}\setminus \{w_0\}$. Then, since for all $n\geq 1$ one has
\[\sup(|V_n|_w,|V_{n-1}|_w) \leq \sup(|V_{n-1}|_w,|V_{n-2}|_w)\sup(|a_n|_w,1),\] 
iterating one gets
\begin{equation}\label{w-non-arch}
\sup(|V_n|_w,|V_{n-1}|_w)\leq C_w\prod_{j=1}^n \sup (|a_j|_v,1)
\end{equation}
where  $C_w=\sup(|w_0|_{w}, |V_1|_w)=\sup(|a_0-\alpha|_w,1)$ and clearly $C_w=1$ for almost all~$w$. 

Finally, if $w=w_0$, {from \cite[Proposition 3.6, points (c) and (g)]{CapuanoMurruTerracini2022} one has that for every $n\geq 1$, \[|V_n|_{w_0}=\prod_{i=1}^{n+1}\frac{1}{|a_i|_{w_0}},\] and $|a_n|_{w_0}\geq 1$. Hence }\begin{equation}\label{w=w_0}\sup(|V_n|_{w_0}^{d_{w_0}},|V_{n-1}|_{w_0}^{d_{w_0}})=|V_{n-1}|_{w_0}^{d_{w_0}}.\end{equation}

But now, by \eqref{H-bound} and \eqref{w=w_0} we have
\[
        H(\alpha_{n+1})^d = |V_{n-1}|_{w_0}^{d_{w_0}}  \prod_{\sigma \in \Sigma} \max(|\sigma(V_n)|,|\sigma(V_{n-1})| )\prod_{w\in \calM_K^0\setminus \{w_0\}}\max(|V _n|_w,|V_{n-1}|_w)^{d_w}.\]
        Therefore, using \eqref{eq:theta_est} and \eqref{w-non-arch} and rearranging the factors we get  
    \begin{align*}
    H(\alpha_{n+1})^d
    &\leq C \cdot |V_{n-1}|_{w_0}^{d_{w_0}} \prod_{j=1}^n \left(\prod_{\sigma\in \Sigma} \theta(\sigma(a_j))\prod_{w\in \calM_K^0\setminus \{w_0\}} \sup(|a_j|_w,1)^{d_w}\right) \\
    &\leq C\cdot |V_{n-1}|_{w_0}^{d_{w_0}} \nu_\tau^n \prod_{j=1}^n|a_j|_{w_0}^{d_{w_0}},
   \end{align*}
   where 
   \[
   C=C_{\infty}\cdot\prod_{w\in \calM_K^0\setminus \{w_0\}} C_w 
   \] 
   is an explicit constant depending only on $\alpha$.
   Finally, by \cite[Proposition 3.6.(d),(g)]{CapuanoMurruTerracini2022},
    we have \[|V_{n-1}|_{w_0}=\left|(-1)^n\prod_{j=1}^{n} \frac 1{\alpha_j}\right|_{w_0}=\prod_{j=1}^{n} \left |\frac 1{a_j}\right |_{w_0} \] so that
    $$ H(\alpha_{n+1})^d \leq C\,\nu_\tau^n$$ and \eqref{boun-H-to-show} is proven, as desired.
    
   Suppose now that $\nu_{\tau}\leq 1$. In view of \eqref{boun-H-to-show}, for every $n\geq 1$ \[\alpha_n\in \mathcal{S}_{d,C}:=\{\alpha\in \overline{\mathbb{Q}}\mid [\mathbb{Q}(\alpha):\mathbb{Q}]\leq d, H(\alpha)\leq C^{1/d}\}.\] 
   The set $\mathcal{S}_{d,C}$ is finite by Northcott's theorem and, by \cite[proof of Theorem 1.6.8]{BombieriGubler2006} its cardinality can be bounded as \[|\mathcal{S}_{d,C}|\leq C_{\alpha}:=d(2^{d+1}\lceil C\rceil+1)^{d+1}.\]  
   
   Therefore either the continued fraction expansion of type $\tau$ for $\alpha$ is finite, of length at most $C_{\alpha}$, or there exist $m,n\in\NN$ such that $\alpha_m=\alpha_n$, so that the expansion is periodic of period length at most $C_{\alpha}$. 
   If moreover  $\nu_{\tau}< 1$, then $H(\alpha_{n+1})<H(\alpha_n)$ for all $n$ and, by Northcott's theorem, this can happen only for finitely many $n$, so the expansion of $\alpha$ must be finite.
    \end{proof}

\section{The \texorpdfstring{$\CFF$}{CFF} property for fields}
\label{sec:CFF}
 \subsection{A bound involving the covering radius}
 
We have the following effective version of Lemma 5.5 of \cite{CapuanoMurruTerracini2022}.
\begin{lemma}\label{lem:effective-5.5} Let $K$ be a number field, let $\Sigma$  be the set of embeddings of $K$ in $\CC$ and denote by $\calo_K^{\times}$ the units in $\calo_K$. 
Set $T_0=e^{\rho_\infty(K)} $, where $\rho_\infty(K)$ denotes the covering radius as in Section \ref{ss:lattices}. Then, for every $a\in K^\times $, there exists $u\in \calo_K^\times$ such that
$$ |\sigma(ua)|  \leq T_0 \sqrt[d]{|N_{K/\QQ}(a)|}$$
 for every $\sigma\in\Sigma $, where $|\cdot|$ is the usual complex absolute value.
\end{lemma}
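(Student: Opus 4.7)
The plan is to use the logarithmic embedding together with the defining property of the covering radius $\rho_\infty(K)$ of the unit lattice $\Lambda_K=\ell(\calo_K^{\times})$. Recall that $\mathrm{Span}(\Lambda_K)$ is the hyperplane of vectors whose coordinates sum to zero, while the coordinates of $\ell(a)$ sum to $\log|N_{K/\QQ}(a)|$ (the factor $2$ on complex places ensures this). The idea is to project $\ell(a)$ onto $\mathrm{Span}(\Lambda_K)$ and then approximate the projection by an element of $\Lambda_K$, i.e.\ by $\ell(u)$ for a suitable unit $u$.

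More concretely, I would set
\[
\mathbf{w}_a := \ell(a) - \frac{\log|N_{K/\QQ}(a)|}{d}\,(\underbrace{1,\dots,1}_{r_1},\underbrace{2,\dots,2}_{r_2}),
\]
observe that the coordinate sum of $\mathbf{w}_a$ vanishes so that $-\mathbf{w}_a\in\mathrm{Span}(\Lambda_K)$, and then invoke the definition of $\rho_\infty(K)$ to obtain $u\in\calo_K^{\times}$ with $\|\ell(u)-(-\mathbf{w}_a)\|_\infty\leq \rho_\infty(K)$. Since $\ell(u)+\ell(a)=\ell(ua)$, this rewrites as
\[
\Bigl\|\,\ell(ua)-\tfrac{\log|N_{K/\QQ}(a)|}{d}\,(1,\dots,1,2,\dots,2)\,\Bigr\|_\infty\leq \rho_\infty(K).
\]

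Reading this coordinate by coordinate finishes the argument. For a real embedding $\sigma_i$ with $i\leq r_1$, the corresponding entry of $\ell(ua)$ is $\log|\sigma_i(ua)|$, so the bound yields $|\sigma_i(ua)|\leq e^{\rho_\infty(K)}\sqrt[d]{|N_{K/\QQ}(a)|}=T_0\sqrt[d]{|N_{K/\QQ}(a)|}$. For a complex embedding $\tau_j$, the weight $2$ appearing in $\ell$ actually produces an even stronger bound $|\tau_j(ua)|\leq e^{\rho_\infty(K)/2}\sqrt[d]{|N_{K/\QQ}(a)|}$, which is still $\leq T_0\sqrt[d]{|N_{K/\QQ}(a)|}$. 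This covers every $\sigma\in\Sigma$ and gives the stated inequality.

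No step looks particularly hard: the only delicate point is the asymmetry in the weights (1 for real, 2 for complex) in the definition of $\ell$, which one might fear would weaken the bound at the complex places, but it in fact works in our favor. What makes the present statement \emph{effective}, compared with Lemma 5.5 of \cite{CapuanoMurruTerracini2022}, is simply that the constant $T_0$ is displayed explicitly in terms of $\rho_\infty(K)$, a quantity which itself admits the elementary upper bound of Remark \ref{rem:naiveCoveringRadius} in terms of any $\ZZ$-basis of $\Lambda_K$.
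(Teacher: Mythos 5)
Your proof is correct and follows essentially the same argument as the paper: you define the same auxiliary vector (the paper calls it $\mathbf{b}$, you call it $\mathbf{w}_a$), invoke the covering radius of $\Lambda_K$ to produce the unit $u$, and read off the inequality coordinate by coordinate, including the observation that the factor $2$ at complex places only helps. The only cosmetic difference is that you phrase the approximation as approximating $-\mathbf{w}_a$ by $\ell(u)$, whereas the paper writes $\|\mathbf{b}+\ell(u)\|_\infty\leq\rho_\infty(K)$, which is the same thing.
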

\begin{proof} 
For $a\in K^\times$, consider the vector $$\mathbf{b}=\ell(a)-\frac {\log{|N_{K/\QQ}(a)|}} d (1,\ldots,1,2,\ldots,2) $$ where $d=[K:\mathbb{Q}]$.

By construction, $\mathbf{b}\in \mathrm{Span}(\Lambda_K)$. Hence, by definition of the covering radius, there exists $u\in \calo_K^\times$ such that $||\mathbf{b}+\ell(u)||_\infty\leq \rho_\infty(K)$.  But now, denoting by $\sigma_1,\ldots,\sigma_{r_1}\in \Sigma$ the real embeddings of $K$ and by  $(\tau_1,\bar{\tau}_1),\ldots,(\tau_{r_2},\bar{\tau}_{r_2})$ the $r_2$ pairs of complex conjugated embeddings, we have that \[\mathbf{b}+\ell(u)=(v_1,\ldots,v_{r_1},w_1,\ldots,w_{r_2})\] where, for all $1\leq i\leq r_1$ and $1\leq j\leq r_2$, one has  
\[v_i=\log \frac {|\sigma_i(a)|}{\sqrt[d]{|N_{K/\QQ}(a)|}} +\log(|\sigma_i(u)|)= \log \frac {|\sigma_i(au)|}{\sqrt[d]{|N_{K/\QQ}(a)|}}\]

and \[w_j=2\log \frac {|\tau_j(a)|}{\sqrt[d]{|N_{K/\QQ}(a)|}} +2\log(|\tau_j(u)|)=2 \log \frac {|\tau_j(au)|}{\sqrt[d]{|N_{K/\QQ}(a)|}}.\]

Therefore \[\rho_\infty(K)\geq ||\mathbf{b}+\ell(u)||_\infty= \max(|v_1|,\ldots,|v_{r_1}|,|w_1|,\ldots,|w_{r_2}|)\geq \left| \log \frac {|\sigma(au)|}{\sqrt[d]{|N_{K/\QQ}(a)|}}\right|\] for every $\sigma\in \Sigma$.
Thus, for every $\sigma\in \Sigma$, one has 
$\left |{\sigma(au)}\right|\leq T_0 \sqrt[d]{|N_{K/\QQ}(a)|}$, where $T_0=e^{\rho_\infty(K)}$, as wanted.
\end{proof}
\subsection{A criterion for \texorpdfstring{$(\mathfrak{P},\calT)$}{P,T}-adic \texorpdfstring{$\CFF$}{CFF}-property for a field}
Let $K$ be a number field of degree $d$ and let 
$\iota:K\hookrightarrow \RR^{r_1}\times \CC^{r_2}$ and $N:\RR^{r_1}\times\CC^{r_2}\rightarrow \RR$ be, respectively, be the Euclidean embedding and the norm map defined in Section \ref{sec-embeddings}. For an ideal $\mathfrak{I}\subset \calo_K$, we denote by $\mathcal{N}(\mathfrak{I})=|\calo_K/\mathfrak{I}|$ its (ideal) norm while, for an element $x\in K$, we denote by $N_{K/\QQ}(x)$ its (field) norm.

\begin{definition}Let $\mathfrak{A}$ be a non-zero ideal of $\mathcal{O}_K$, let $\mathcal{T}\subseteq \calo_K \setminus\{0\}$ be a finite subset and let $0<\delta < 1$. We say that \emph{property ($\star$) holds for the triple $(\mathfrak{A},\calT,\delta)$} if, for each  $\xi\in \RR^{r_1}\times \CC^{r_2}$, there exists $\omega\in\iota(\mathfrak{A})$ and  $x\in\iota(\calT)$   such that  $|N(x\xi -\omega)| <\delta \mathcal{N}(\mathfrak{A})$.
\end{definition}
Notice that property ($\star$)  holds for the triple $(\mathfrak{A},\calT,\delta)$ if and only if it holds for any triple $(\mathfrak{B},\calT,\delta)$, where $\mathfrak{B}$ is any element of the ideal class of $\mathfrak{A}$. Therefore we have the following definition:
\begin{definition}\label{eq:Dproperty} 
    Let $\calC$ be an ideal class in $\calo_K$, let $\mathcal{T}\subseteq \calo_K $ be a finite subset and let $0<\delta \leq 1$. We say that \emph{property ($\star$) holds for the triple $(\calC,\calT,\delta)$} if one of the following equivalent conditions is satisfied:
    \begin{itemize}
    \item[(i)] property ($\star$) holds for one triple $(\mathfrak{A},\calT,\delta)$ with $\mathfrak{A}\in \mathcal{C}$;
    \item[(ii)] property ($\star$) holds for all triples $(\mathfrak{A},\calT,\delta)$ with $\mathfrak{A}\in \mathcal{C}$.
    \end{itemize}
\end{definition}

\begin{theorem}\label{teo:fullresult}
Let $K$ be a number field, $\calT\subset\calo_K\setminus\{0\}$ be a finite set, $\mathcal{C}$ an ideal class of $\calo_K$ and $0<\delta <1$. Suppose that property  ($\star$) holds for the triple $(\mathcal{C},\mathcal{T}, \delta)$.
Then $K$ satisfies the $(\mathfrak{P},\calT)$-adic $\CFF$-property for all but finitely many prime ideals $\mathfrak{P}$ of $\mathcal{C}$. 
\end{theorem}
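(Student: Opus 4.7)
The strategy is to exhibit, for each prime $\mathfrak{P}$ in the ideal class $\mathcal{C}$ outside a finite set $\mathcal{P}_0$, a $\mathfrak{P}$-adic $\calT$-floor function $s$ on $K$ for which the associated type $\tau = (K, \mathfrak{P}, \calT, s)$ satisfies $\nu_\tau < 1$; the $(\mathfrak{P}, \calT)$-adic $\CFF$ property then follows from Theorem \ref{teo:FCcondition}(b). We let $\mathcal{P}_0$ consist of the primes of $\mathcal{C}$ dividing some element of $\calT$, together with those of norm below a threshold determined in the estimate below. For $\mathfrak{P} \in \mathcal{C} \setminus \mathcal{P}_0$, every $t \in \calT$ is a $\mathfrak{P}$-adic unit.

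Fix such a $\mathfrak{P}$. For each residue class $\bar\beta \in \calo_K/\mathfrak{P}$ choose a lift $\hat\beta \in \calo_K$ and apply property $(\star)$ to $\xi = \iota(\hat\beta)$ with $\mathfrak{A} = \mathfrak{P}$: this yields $t \in \calT$ and $y \in \mathfrak{P}$ with $|N_{K/\QQ}(t\hat\beta - y)| < \delta \mathcal{N}(\mathfrak{P})$. Setting $r_{\bar\beta} = \hat\beta - y/t \in K$ produces an element congruent to $\hat\beta$ modulo $\mathfrak{P}$ (since $v_\mathfrak{P}(t) = 0$), whose denominator outside $\mathfrak{P}$ divides $t$, and satisfying $|N_{K/\QQ}(t r_{\bar\beta})| < \delta \mathcal{N}(\mathfrak{P})$. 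Because $(\star)$ controls only the total field norm, we additionally balance individual Archimedean embeddings by applying Lemma \ref{lem:effective-5.5} to $t r_{\bar\beta}$: the resulting unit $u$ produces $u r_{\bar\beta}$ (representing the residue class $u \bar\beta$) with $|\sigma(u r_{\bar\beta})| \leq T_0 \sqrt[d]{|N_{K/\QQ}(r_{\bar\beta})|}$ for every $\sigma \in \Sigma$. Running through all residue classes yields a complete system of distinguished representatives; we then extend the assignment to a floor function $s: K_\mathfrak{P} \to K$ on $K_\mathfrak{P} \setminus \calo_\mathfrak{P}$ via $\pi$-adic Laurent truncation for a suitable uniformizer $\pi \in K \cap \mathfrak{P}$, and verify conditions (i)--(iv) of Definition \ref{def-btff}.

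With this $s$ the core task is to prove $\nu_\tau < 1$. For $a = s(\alpha)$ with $|a|_{w_0} > 1$, the factor $|a|_{w_0}^{-d_{w_0}} = \mathcal{N}(\mathfrak{P})^{v_\mathfrak{P}(a)}$ is exponentially small in $-v_\mathfrak{P}(a) \geq 1$. Via \eqref{disug-theta} and the Archimedean balancing above, $\prod_{\sigma \in \Sigma} \theta(\sigma(a))$ is bounded by a constant times $|N_{K/\QQ}(a)|$, while $\prod_{w \neq w_0} \max(|a|_w, 1)^{d_w}$ is bounded by the norm of the outside-$\mathfrak{P}$ denominator of $a$, controlled by $|N_{K/\QQ}(t)|$ for an appropriate $t \in \calT$. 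Combining these bounds with the key inequality $|N_{K/\QQ}(ta)| < \delta \mathcal{N}(\mathfrak{P})$ inherited from property $(\star)$ shows that $\nu_\tau$ is at most an explicit constant $C(K, \calT, \delta, T_0)$ times a negative power of $\mathcal{N}(\mathfrak{P})$; absorbing the finitely many primes of smaller norm into $\mathcal{P}_0$ then forces $\nu_\tau < 1$.

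\textbf{Main obstacle.} The principal difficulty is reconciling property $(\star)$, which controls only the \emph{product} of Archimedean absolute values of the representatives, with the need to bound each individual $|\sigma(a)|$ in $\prod_\sigma \theta(\sigma(a))$. Lemma \ref{lem:effective-5.5} bridges this gap, but one must apply it to $t r_{\bar\beta}$ rather than to $r_{\bar\beta}$ so that the denominator $t$ is preserved under the unit multiplication, and one must track how the unit action permutes residue classes. A secondary technicality is ensuring that, once $s$ is extended to all of $K_\mathfrak{P}$, the estimates valid for the representatives propagate uniformly to $s(\alpha)$, which is typically handled by choosing a uniformizer with balanced Archimedean embeddings.
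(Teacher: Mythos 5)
Your high-level plan—construct a $\mathfrak{P}$-adic $\calT$-floor function whose image is small in norm via property~$(\star)$, balance the Archimedean embeddings via Lemma~\ref{lem:effective-5.5}, then invoke Theorem~\ref{teo:FCcondition}(b)—matches the paper's strategy, and your description of $\mathcal{P}_0$ is right. But the way you combine the two ingredients has a genuine gap that you flag but do not resolve, and it is not a mere technicality.

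The problem is where you apply Lemma~\ref{lem:effective-5.5} to $t\,r_{\bar\beta}$ \emph{per residue class}. The unit $u$ produced by the lemma depends on $\bar\beta$, and $u\,r_{\bar\beta}$ represents $u\bar\beta$, not $\bar\beta$. So running $\bar\beta$ through $\calo_K/\mathfrak{P}$ produces one balanced representative for each element of the \emph{multiset} $\{u_{\bar\beta}\bar\beta\}$, and there is no reason this map is a bijection of $\calo_K/\mathfrak{P}$ onto itself; some classes can be hit several times and others missed entirely. The assertion that this "yields a complete system of distinguished representatives" is unjustified, and without it the floor function is not defined. The paper avoids this entirely by applying Lemma~\ref{lem:effective-5.5} \emph{once per prime} $\mathfrak{P}$—to a generator $\gamma_\mathfrak{P}$ of $\mathfrak{P}\mathfrak{A}^{-1}$, where $\mathfrak{A}\in\mathcal{C}$ is fixed—and then dividing by $\gamma_\mathfrak{P}$: the balancing is absorbed into the factor $\gamma_\mathfrak{P}$ that multiplies every representative, so it never moves the residue class. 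Property~$(\star)$ is then applied inside a fixed compact fundamental domain $\calD$ for $\iota(\mathfrak{A})$ (not at the level of $\calo_K/\mathfrak{P}$), and the compactness of $\calD$ supplies the uniform sup-norm bound on $x_i\iota(\beta)-\gamma_i$ that your argument would also need but has no source for, since the residue ring $\calo_K/\mathfrak{P}$ has no such uniform structure as $\mathfrak{P}$ varies.

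A secondary issue: the "Laurent truncation" you propose for extending $s$ from residue classes in $\calo_\mathfrak{P}/\mathfrak{P}\calo_\mathfrak{P}$ to all of $K_\mathfrak{P}/\mathfrak{P}\calo_\mathfrak{P}$ is not spelled out, and the cosets with negative valuation are infinite in number and not reached by the finite list you construct. The paper instead uses strong approximation to pick a representative $\alpha'\in K$ of the coset of $\eta$ with $|\alpha'|_v\le 1$ at every non-Archimedean $v\ne w_0$, which immediately handles condition~\ref{item-ii} and also controls the non-Archimedean factor $\prod_{w\ne w_0}\max(|a|_w,1)^{d_w}$ in $\nu_\tau$; your truncation approach would need a separate argument to see that the estimates propagate, as you yourself note. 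In short: the diagnosis of the main obstacle is correct, but the proposed bridge does not cross it, and the correct bridge (balance $\gamma_\mathfrak{P}$, not the representatives; work in a fixed compact fundamental domain for $\mathfrak{A}$; use strong approximation) is essential, not optional.
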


\begin{proof} 
As usual, we denote by $||\cdot||_{\infty}$ the sup norm of a vector and by $|\cdot|$ the usual complex absolute value.

We fix an integral ideal $\mathfrak{A}\in\calC$ and we denote by $\calD\subseteq \RR^{r_1}\times \CC^{r_2}$ a compact fundamental domain for $\iota(\mathfrak{A})$. 
From the fact that Property ($\star$) holds for $(\mathcal{C},\mathcal{T}, \delta)$, that $\calD$ is compact and that $N$ is continuous, we deduce that  there exists a finite number of pairs $(x_1,\gamma_1),\ldots,(x_t,\gamma_t)$ with $x_i\in \iota(\calT)$, $\gamma_i\in\iota(\mathfrak{A})$  such that the open sets \[U_i=\{\alpha\in\calD \mid |N(x_i\alpha+\gamma_i)|<\delta \mathcal{N}(\mathfrak{A}) \} \] 
form a finite covering of $\calD$. 

Moreover, for every $\alpha\in\calD$, we also have that $|| x_i\alpha+\gamma_i||_\infty < \Gamma$, where \[\Gamma=\max_i||x_i||_{\infty}\max_{\alpha\in \mathcal{D}}||\alpha||_{\infty}+\max_i||\gamma_i||_\infty\] is a well-defined positive constant only depending on the finitely many pairs $(x_i,\gamma_i)$ and on $\mathfrak{A}$.

 Let now $\mathfrak{P}$ be a prime ideal in $\calC$ and suppose that $\mathfrak{P}\cap\mathcal{T}=\emptyset$. Since $\mathcal{T}$ is finite, this will be true for all but finitely many $\mathfrak{P}\in\mathcal{C}$. We want to show that, for all but finitely many of such ideals $\mathfrak{P}$, there is a type $(K,\mathfrak{P},\calT, s)$  satisfying the CFF property.\\

To this aim, we now construct a suitable $\mathfrak{P}$-adic $\calT$-floor function $s \colon K_{\mathfrak{P}} \rightarrow K$. Let $\eta\in K_{\mathfrak{P}}$ and suppose that $\eta$ belongs to the coset $\alpha+\mathfrak{P}\calo_{\mathfrak{P}} \subseteq  K_{\mathfrak{P}}$.
 By strong approximation, such coset contains an element $\alpha'\in K$ such that $|\alpha'|_v\leq 1$ for every non-Archimedean $v\in \calM_K\setminus\{w_0\}$, where $w_0$ is the place of $K$ corresponding to $\mathfrak{P}$.   

Also, by Lemma \ref{lem:effective-5.5}, we can take an element $\gamma_{\mathfrak{P}}\in K$ such that $\gamma_{\mathfrak{P}}\mathfrak{A}=\mathfrak{P}$ and $|\sigma(\gamma_{\mathfrak{P}})| \leq T_{0} \sqrt[d]{|N_{K/\QQ}(\gamma_\mathfrak{P})|}$ for every embedding $\sigma$ of $K$ in $\mathbb{C}$, where $T_0=e^{\rho_{\infty}(K)}$ and $d=[K:\QQ]$.

Now, take $y\in \mathfrak{A}$ such that the translate \[\beta=\frac{\alpha'}{\gamma_{\mathfrak{P}}}+y \] of the element ${\alpha'}/{\gamma_{\mathfrak{P}}}$ satisfies $\iota(\beta)\in \calD$.
 
Furthermore, by previous considerations, there exist $x_i\in \iota(\mathcal{T})$ and $\gamma_i\in\iota(\mathfrak{A})$ such that $|N(x_i\iota(\beta)-\gamma_i)|<\delta \mathcal{N}(\mathfrak{A})$, and $||x_i\iota(\beta)-\gamma_i||_\infty < \Gamma$. Finally,  writing $x_i=\iota(t_i)$ with $t_i\in \mathcal{T}$ and $\gamma_i=\iota(a_i)$ with $a_i\in\mathfrak{A}$, we set  \[ s(\eta)=\gamma_{\mathfrak{P}}\left(\beta-\frac{a_i}{t_i}\right).\]

We now want to verify that the function $s:K_\mathfrak{P}\rightarrow K$ so defined is indeed a  $\mathfrak{P}$-adic $\calT$-floor function for $K$, as in Definition \ref{def-btff}.

First notice that condition \ref{item-iii} clearly holds, as well as condition \ref{item-iv}, as, by construction, $s(\eta)$ only depends on the class of $\eta$ modulo $\mathfrak{P}$. So we are left to verify conditions \ref{item-i} and \ref{item-ii}.

For every $\eta\in K_{\mathfrak{P}}$, we have that \[s(\eta)-\eta=\gamma_{\mathfrak{P}}y-\gamma_{\mathfrak{P}}\frac{a_i}{t_i} \mod \mathfrak{P}.\]
As both $\gamma_{\mathfrak{P}}y,\gamma_{\mathfrak{P}}a_i\in \gamma_{\mathfrak{P}}\mathfrak{A}=\mathfrak{P}$, we have $t_i(s(\eta)-\eta)\in \mathfrak{P}$. But now notice that $t_i\not\in\mathfrak{P}$ (indeed $t_i \in \calT$ and, by hypothesis, $\calT\cap\mathfrak{P}=\emptyset$). This implies that $s(\eta)-\eta\in \mathfrak{P}$, condition \ref{item-i} is satisfied.
 
As for condition \ref{item-ii}, let $\eta\in K_\mathfrak{P}$. We want to show that there exists $t\in \calT$ such that $xs(\eta)$ is $\mathfrak{Q}$-integral for every prime $\mathfrak{Q}\not= \mathfrak{P}$ and we claim that we can take $t=t_i$. Indeed, if $v$ is the place corresponding to $\mathfrak{Q}$, we have
\begin{equation}\label{|ta|_v}
\begin{split}
    |t_i s(\eta)|_v&= |t_i\gamma_{\mathfrak{P}}\beta -\gamma_{\mathfrak{P}}a_i|_v= |t_i\alpha'+t_i\gamma_{\mathfrak{P}}y- \gamma_{\mathfrak{P}}a_i|_v\\
    &\leq \max(|t_i|_v|\alpha'|_v,|t_i\gamma_{\mathfrak{P}}y- \gamma_{\mathfrak{P}}a_i|_v)\leq1
\end{split}
\end{equation}
as $|\alpha'|_v\leq 1$ and both $t_i$ and $t_i\gamma_{\mathfrak{P}}y- \gamma_{\mathfrak{P}}a_i$ are in $\calo_K$.

We want now to apply Theorem \ref{teo:FCcondition} to show that the type $(K,\mathfrak{P},\calT, s)$ associated to the floor function $s$ constructed above satisfies the $\CFF$ property for all but finitely many prime ideals $\mathfrak{P}\in\calC$. To this aim we have to show that for every $a\in s(K_{\mathfrak{P}})$ such that $|a|_{w_0}>1$ we have 
\begin{equation}\label{nu-tau-a} {|a|_{w_0}^{-d_{w_0}}}{\prod_{\sigma\in\Sigma}\theta(\sigma(a))\prod_{w\in \mathcal{M}_K^0\setminus\{w_0\}} \max(|a|_w,1)^{d_w}} <1\end{equation}
where $\theta$ was defined in \eqref{def-theta}.

Let us set $a=s(\eta)$ with $\eta\in K_{\mathfrak{P}}$ as before.
We first give a bound for the factor $\prod_{\sigma\in\Sigma}\theta(\sigma(a))$.
Notice that we have 
\[|N(x_i \iota(a))|=|N(\iota(\gamma_{\mathfrak{P}}))N(x_i\iota(\beta)-\gamma_i)|<\delta |N_{K/\mathbb{Q}}(\gamma_{\mathfrak{P}})|\mathcal{N}(\mathfrak{A})=\delta \mathcal{N}(\mathfrak{P})\] so, setting $q=\mathcal{N}(\mathfrak{P})$, we get
 \begin{equation}\label{bound-N(iota(a))}
     |N(\iota(a))|<\frac {\delta q} {|N(x_i)|}.
 \end{equation}

Also, we have $||x_i\iota(a)||_\infty =
||\iota(\gamma_{\mathfrak{P}})||_\infty||x_i\iota(\beta)-\gamma_i||_\infty < \left(T_0\sqrt[d]{|N_{K/\QQ}(\gamma_{\mathfrak{P}})|}\right)\Gamma$, so that
\begin{equation}\label{bound-||iota(a)||}
||\iota(a)||_{\infty} < \frac{T_0 {\Gamma}\sqrt[d]{|N_{K/\QQ}(\gamma_{\mathfrak{P}})|}}{||x_i||_{\infty}}.
\end{equation}
 Denoting by $\Sigma$ the embeddings of $K$ into $\mathbb{C}$, for every subset $S\subsetneq \Sigma$ we have
 \begin{equation}
   \prod_{\sigma\in S} |\sigma(a)|\leq (\Gamma_0 T_0)^{|S|} \sqrt[d]{q^{|S|}}
\end{equation}
where $\Gamma_0=\Gamma/\min_i ||x_i||_{\infty}$ depends on $\mathfrak{A}$ and $\calT$.
Therefore, using also \eqref{disug-theta}, we get 
\begin{align*} 
\prod_{\sigma\in\Sigma}\theta(\sigma(a))&\leq \prod_{\sigma\in\Sigma }(1+|\sigma(a)|)
\leq \sum_{S\subseteq \Sigma} \prod_{\sigma\in S} |\sigma(a)| =|N(\iota(a))|+ \sum_{S\subsetneq \Sigma} \prod_{\sigma\in S} |\sigma(a)|\\
&< \frac{\delta q}{|N(x_i)|} + \sum_{i=0}^{d-1} {{d}\choose{i}}(\Gamma_0 T_0\sqrt[d]{q})^i\leq \frac{\delta q}{|N(x_i)|} + \Gamma_1 \sqrt[d]{q^{d-1}}
\end{align*}
where for instance one can take \[\Gamma_1=\left(2\max(1,\Gamma_0T_0)\right)^{d-1}\]
which depends only on $K$, $\mathfrak{A}$ and $\calT$, but not on $\mathfrak{P}$. 
Hence, for all $\delta<\epsilon<1$, if \begin{equation}\label{low-bound-q}q>\left(\frac{\Gamma_1|N(x_i)|}{(\epsilon-\delta)}\right)^d\end{equation} we obtain
\begin{equation}\label{bound-prod-sigma}
\prod_{\sigma\in\Sigma}\theta(\sigma(a))<\frac{\epsilon q}{|N(x_i)|}.
\end{equation}

Let $w\not= w_0$ be a non-Archimedean place of $K$. By \eqref{|ta|_v}, we have that $|a|_w\leq \frac 1 {|t_i|_w}$ and $t_i\in\calo_K\setminus \mathfrak{P}$, so that $|t_i|_w\leq 1$
 and $|t_i|_{w_0}=1$. Putting this together with the product formula gives \begin{equation}\label{bound-factor2}
 \prod_{\mathclap{w\in\calM_K^0\setminus\{w_0\}}}\max(|a|_w,1)^{d_w} \leq \prod_{\mathclap{w\in\calM_K^0\setminus\{w_0\} }}\max\left(\frac {1}{|t_i|_w},1\right)^{d_w} = \prod_{w\in \calM_K^0} \frac {1} {|t_i|_w^{d_w}}=|N_{K/\QQ}(t_i)|.
\end{equation}

Since by hypothesis $|a|_{w_0}>1$, then $|a|_{w_0}^{d_{w_0}}\geq |a|_{w_0}\geq q$. 
This, together with \eqref{bound-prod-sigma} and \eqref{bound-factor2} (recalling also that $N(x_i)=N(\iota(t_i))=N_{K/\QQ}(t_i)$), gives that the left-hand side in \eqref{nu-tau-a} is bounded above by $\epsilon<1$ for any $q$ satisfying \eqref{low-bound-q}, so Theorem \ref{teo:FCcondition} applies and concludes the proof.
\end{proof}

\begin{remark}
Although the validity of property ($\star$) for $\calC$ is independent from the choice of the ideal $\mathfrak{A}$, it is important to highlight that the bound for the norm of the ideals $\mathfrak{P}$ satisfying the theorem does depend on the specific choice of the ideal $\mathfrak{A}$ (as well as on the set $\calT$ and the field $K$) and on the chosen covering for $\calD$.
\end{remark}
\subsection{Explicit realization of the \texorpdfstring{$(\mathfrak{P},\calT)$}{(P,T)}-adic \texorpdfstring{$\CFF$}{CFF}-property}

The goal of this section is to prove Theorem \ref{teo:fullresult-explicit}, which provides an explicit way of constructing sets $\mathcal{T}$ for which the  $(\mathfrak{P},\calT)$-adic $\CFF$-property holds on all ideals of sufficiently large norm. \par At first glance, this statement might seem stronger than Theorem \ref{teo:fullresult} from the previous section. However, while we gain an explicit lower bound for the norm of the ideals for which it holds, we lose flexibility in the choice of the set of denominators~$\calT$.

The heart of the proof of Theorem~\ref{teo:fullresult-explicit} is the following effective and slightly more general version of a result originally proven by Hurwitz \cite{Hurwitz1919} and based on its ideas. We also refer the reader to \cite[Theorem 1.4]{Lenstra1976} and \cite[Theorem 2.1]{LongThistlethwaiteMorwen2016} for a detailed description of the original proof.

\begin{theorem}\label{teo:Hurwitz3} Let $K$ be a number field with signature $(r_1,r_2)$ and let $\Delta$ be the absolute discriminant of $K$.
Let $\mathfrak{A}$ be an integral ideal in $\calo_K$ of norm $\mathcal{N}(\mathfrak{A})$ and let \begin{equation}\label{cont-C(A,K)}
c(\mathfrak{A},K)=\max\left( \left (\frac 2 \pi\right)^{r_2}\sqrt{|\Delta|}\mathcal{N}(\mathfrak{A}),1\right).
\end{equation}

Then, for every rational integer $M>c(\mathfrak{A},K)$, there exists a real number $\epsilon=\epsilon(\mathfrak{A})$ with $0<\epsilon <1$ satisfying the following property:
for each $\xi\in K$, there is $\tau\in\mathfrak{A}$ and a rational integer $0<j<M$  for which
$|\sigma(j\xi -\tau)|<\epsilon$ for every  embedding $\sigma$ of $K$ in $\CC$.
In particular we have \[|N_{K/\QQ}(j\xi -\tau)|<\epsilon^d.\]
\end{theorem}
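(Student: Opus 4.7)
The plan is to prove this via a geometry-of-numbers argument, specifically an application of Minkowski's convex body theorem to a suitable full-rank lattice in $\RR^{d+1}$ (with $d=r_1+2r_2=[K:\QQ]$) and to a convex, centrally symmetric box whose size in the first coordinate is controlled by $M$ and in the remaining coordinates by $\epsilon$. This is the same framework used in Hurwitz's original proof.

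For each $\xi\in K$, I would introduce the full-rank lattice
$$\Lambda_\xi\;=\;\ZZ\cdot(1,\iota(\xi))\,+\,\{0\}\times\iota(\mathfrak{A})\;\subset\;\RR\times(\RR^{r_1}\times\CC^{r_2})\cong\RR^{d+1},$$
identifying $\CC$ with $\RR^2$ in the usual way. Expanding the determinant of a basis matrix along the first coordinate shows that the covolume of $\Lambda_\xi$ equals that of $\iota(\mathfrak{A})\subset\RR^d$, namely $2^{-r_2}\sqrt{|\Delta|}\mathcal{N}(\mathfrak{A})$, and in particular does not depend on $\xi$. I would then fix once and for all an $\epsilon=\epsilon(\mathfrak{A})\in(0,1)$ satisfying $M\epsilon^d>(2/\pi)^{r_2}\sqrt{|\Delta|}\mathcal{N}(\mathfrak{A})$; such an $\epsilon$ exists precisely because $M$ is a rational integer with $M>c(\mathfrak{A},K)$, where the max with $1$ in the definition of $c(\mathfrak{A},K)$ is what guarantees $M\geq 2$ in the regime where $(2/\pi)^{r_2}\sqrt{|\Delta|}\mathcal{N}(\mathfrak{A})<1$.

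Next I would apply Minkowski's theorem to the convex, centrally symmetric body
$$\Omega_\epsilon\;=\;(-M,M)\,\times\,\prod_{i=1}^{r_1}(-\epsilon,\epsilon)\,\times\,\prod_{j=1}^{r_2}\{z\in\CC:|z|<\epsilon\}\;\subset\;\RR^{d+1}.$$
Its Lebesgue volume is $2M(2\epsilon)^{r_1}(\pi\epsilon^2)^{r_2}=2^{r_1+1}\pi^{r_2}M\epsilon^d$, and the arithmetic of the exponents works out so that the chosen condition on $M\epsilon^d$ is exactly equivalent to $\mathrm{vol}(\Omega_\epsilon)>2^{d+1}$ times the covolume of $\Lambda_\xi$. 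Minkowski then produces a non-zero lattice point $(n,w)\in\Omega_\epsilon\cap\Lambda_\xi$, which by construction has the form $(n,\iota(n\xi-\tau))$ with $n\in\ZZ$, $|n|<M$, $\tau\in\mathfrak{A}$, and $|\sigma(n\xi-\tau)|<\epsilon$ for every embedding $\sigma:K\hookrightarrow\CC$. To finish, one rules out $n=0$: in that case $\tau\in\mathfrak{A}\subset\calo_K$ would satisfy $|\sigma(\tau)|<\epsilon<1$ for all $\sigma$, hence $|N_{K/\QQ}(\tau)|<1$, forcing $\tau=0$ by integrality of the norm and contradicting the non-zeroness of $(n,w)$. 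After possibly replacing $(n,w)$ by $(-n,-w)\in\Lambda_\xi$, one obtains $0<n<M$, and the norm estimate $|N_{K/\QQ}(n\xi-\tau)|<\epsilon^d$ is immediate from $|N_{K/\QQ}(x)|=\prod_\sigma|\sigma(x)|$.

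The only delicate point is the volume/covolume accounting that produces precisely the factor $(2/\pi)^{r_2}$: this comes from balancing the Archimedean discs $\{|z|<\epsilon\}$ of area $\pi\epsilon^2$ against the $2^{-r_2}$ in the covolume formula for $\iota(\mathfrak{A})$, and it is the only point of the argument that genuinely sees the shape of the constant $c(\mathfrak{A},K)$. Everything else — the existence of $\Lambda_\xi$ as a lattice of the correct covolume, the choice of $\epsilon$ independent of $\xi$, and the norm-integrality argument ruling out $n=0$ — is routine once the Minkowski setup has been put in place.
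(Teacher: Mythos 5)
Your proof is correct, and the accounting checks out: the covolume of $\Lambda_\xi$ in $\RR^{d+1}$ is indeed that of $\iota(\mathfrak{A})$, namely $2^{-r_2}\sqrt{|\Delta|}\mathcal{N}(\mathfrak{A})$, the volume of $\Omega_\epsilon$ is $2^{r_1+1}\pi^{r_2}M\epsilon^d$, and the Minkowski inequality $\mathrm{vol}(\Omega_\epsilon) > 2^{d+1}\mathrm{covol}(\Lambda_\xi)$ reduces exactly to $M\epsilon^d > (2/\pi)^{r_2}\sqrt{|\Delta|}\mathcal{N}(\mathfrak{A})$, which the hypothesis $M>c(\mathfrak{A},K)$ permits with some $\epsilon<1$. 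The exclusion of $n=0$ via integrality of the norm and the sign flip to get $0<n<M$ are also exactly right.

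However, you take a genuinely different route from the paper. The paper does not pass to a lattice in $\RR^{d+1}$ and does not invoke Minkowski's convex body theorem; instead it works directly in $\RR^d$ with the lattice $\lambda(\mathfrak{A})$ and runs a Blichfeldt-style pigeonhole. Concretely, it fixes a fundamental parallelepiped $\calD$ for $\lambda(\mathfrak{A})$, takes the small box $U_\epsilon=\{\|x\|_\infty<\epsilon/2\}$, checks (by the same norm-integrality argument you use for $n=0$) that the reduction map $\varphi_\calD$ is injective on each translate $r\lambda(\xi)+U_\epsilon$ for $r=1,\dots,M$, and then observes that $M\,\mathrm{vol}(U_\epsilon)>\mathrm{vol}(\calD)$ forces two translates $V_r,V_s$ to overlap modulo $\lambda(\mathfrak{A})$; taking $j=s-r$ finishes. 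The trade-off is essentially aesthetic: your version is shorter once Minkowski is cited and packages the argument in a single auxiliary lattice, while the paper's version is more self-contained (it only needs the pigeonhole on volumes, i.e., Blichfeldt, rather than the symmetric-convex-body theorem), makes the integer $j$ visibly arise as a difference $s-r$ of the overlapping translates, and avoids raising the ambient dimension. Since Minkowski's theorem is itself usually proved from Blichfeldt's principle, the two arguments are closely linked; but as written they differ in structure and in which black box is invoked.
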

\begin{proof} 
We consider the embedding $\lambda=f\circ \iota:K\rightarrow \RR^d$ defined in Section \ref{sec-embeddings} and we denote as usual by $||x||_{\infty}=\max_i(|x_i|)$ the sup norm of a vector $x\in \RR^{d}$, where $|\cdot|$ is the usual absolute value on $\RR$. For every $\epsilon>0$, we consider the open ball  \[U_\epsilon=\left\{x \in \RR^{d} \mid ||x||_{\infty}< \frac{\epsilon} 2\right\}.\]
Then for every $x,y\in U_{\epsilon}$ one has \begin{equation}\label{diff-x-y-eps}||x-y||_{\infty}<\epsilon.\end{equation} Moreover 
\[\mathrm{Vol}(U_\epsilon)=\epsilon^{d}\left (\frac{\pi} 4\right )^{r_2}.\]

Let $\mathcal{D}\subseteq \RR^d$ be a  compact fundamental parallelepiped for $\lambda(\mathfrak{A})$. It is known that 
\[\mathrm{Vol}(\mathcal D)= \frac{\sqrt{|\Delta|}N(\mathfrak A)}{2^{r_2}},\] (see for example \cite[\S4.2, Proposition 2]{Samuel1967}); therefore, by hypothesis, there exists $0<\epsilon<1$ such that \begin{equation}\label{choice-M}
M>\frac {\mathrm{Vol}(\mathcal D)}{\mathrm{Vol}(U_\epsilon)}.
\end{equation}
Let $\varphi_{\mathcal{D}}: \mathbb{R}^d \rightarrow \calD$  be the map sending a vector to its representative (modulo the ideal ${\mathfrak{A}}$) lying in $\mathcal{D}$.  Let $\xi\in K$ and for $r\in\{1,\ldots, M\}$ consider the set \[V_r=\varphi_{\calD}(r\lambda(\xi)+U_\epsilon).\] We now show that $\varphi_{\calD}$ is injective on $r\lambda(\xi)+U_\epsilon$. Suppose that $r\lambda(\xi)+u_1=r\lambda(\xi)+u_2+\lambda(\tau)$ for some $u_1,u_2\in U_\epsilon$ and $\tau\in \mathfrak{A}$. Then  $\lambda(\tau)\in U_\epsilon$ and  $||\lambda(\tau)||_{\infty}\leq \epsilon/2$, hence $$|N_{K/\QQ}(\tau)|\leq \left(\frac{\epsilon}{2}\right)^d<1.$$ As $\tau\in\calo_K$ we get $\tau=0$, and $u_1=u_2$ as wanted. 
 
 Therefore  $\mathrm{Vol}(V_r)=\mathrm{Vol}(\varphi_{\calD}(r\lambda(\xi)+U_\epsilon)=\mathrm{Vol}(U_\epsilon)$ for every $r$. By our choice of $\epsilon$ and \eqref{choice-M}, the $V_r$'s cannot be disjoint in pairs. Therefore there exist $1\leq r<s\leq M$ such that $V_r\cap V_s\not= \emptyset$, that is $s\lambda(\xi)+u_1+\lambda(\tau)=r\lambda(\xi)+u_2$ for some $u_1,u_2\in U_\epsilon$ and $\tau\in \mathfrak{A}$. Then \[(s-r)\lambda(\xi)+\lambda(\tau) =u_2-u_1\] so that, by \eqref{diff-x-y-eps}, for every embedding $\sigma$ of $K$ in $\CC$ we have \[|\sigma((s-r)\lambda(\xi)+\lambda(\tau))|\leq ||\lambda((s-r)\xi+\tau)||_{\infty}=||u_1-u_2||_{\infty}<\epsilon\] and the statement is proven taking $j=s-r$.
\end{proof}

\begin{corollary}\label{cor:idealeintero} Let $K$ be a number field of degree $d$ and signature $(r_1,r_2)$ and let $\Delta$ be the absolute discriminant of $K$.  Let \begin{equation}\label{C(K)}
c(K)=\max\left( |\Delta|\left (\frac 8 {\pi^2}\right)^{r_2} \frac{d!}{d^d},1\right).
\end{equation}

Then, for every rational integer $M\geq c(K)$,  there exists a real number $\epsilon$ with $0<\epsilon <1$ such that
 every ideal class $\calC$ in $\mathrm{Cl}(K)$ contains an integral ideal $\mathfrak{A}=\mathfrak{A}(\calC)$ with the following property: 
 for each $\xi\in K$, there is $\tau\in\mathfrak{A}$ and a rational integer $0<j<M$  for which
$|\sigma(j\xi -\tau)|<\epsilon$ for every  embedding $\sigma$ of $K$ in $\CC$.
In particular we have \[|N_{K/\QQ}(j\xi -\tau)|<\epsilon^d.\]
 \end{corollary}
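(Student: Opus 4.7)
The plan is to deduce this corollary directly from Theorem~\ref{teo:Hurwitz3} by using Minkowski's bound to produce, in each ideal class of $K$, a representative whose norm is controlled uniformly in terms of $K$ alone.

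Concretely, recall that every ideal class in $\mathrm{Cl}(K)$ contains an integral ideal whose norm is bounded above by the Minkowski constant
\[
M_K=\left(\frac{4}{\pi}\right)^{r_2}\sqrt{|\Delta|}\,\frac{d!}{d^d}.
\]
For each class $\mathcal{C}\in\mathrm{Cl}(K)$, I would fix such a representative $\mathfrak{A}(\mathcal{C})$. Substituting the bound $\mathcal{N}(\mathfrak{A}(\mathcal{C}))\leq M_K$ into the definition~\eqref{cont-C(A,K)} of $c(\mathfrak{A}(\mathcal{C}),K)$ gives immediately
\[
c(\mathfrak{A}(\mathcal{C}),K)\;=\;\max\!\left(\Bigl(\tfrac{2}{\pi}\Bigr)^{r_2}\sqrt{|\Delta|}\,\mathcal{N}(\mathfrak{A}(\mathcal{C})),\,1\right)\;\leq\;\max\!\left(\Bigl(\tfrac{8}{\pi^2}\Bigr)^{r_2}|\Delta|\,\tfrac{d!}{d^d},\,1\right)\;=\;c(K).
\]
Thus the hypothesis $M\geq c(K)$ of the corollary forces $M\geq c(\mathfrak{A}(\mathcal{C}),K)$ for every class simultaneously, which in turn allows us to apply Theorem~\ref{teo:Hurwitz3} with the ideal $\mathfrak{A}(\mathcal{C})$.

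Theorem~\ref{teo:Hurwitz3} then produces, for each $\mathcal{C}$, a number $\epsilon_{\mathcal{C}}=\epsilon(\mathfrak{A}(\mathcal{C}))\in(0,1)$ such that every $\xi\in K$ admits some $\tau\in\mathfrak{A}(\mathcal{C})$ and some integer $0<j<M$ satisfying $|\sigma(j\xi-\tau)|<\epsilon_{\mathcal{C}}$ for all embeddings $\sigma$ of $K$ into $\CC$. Since $\mathrm{Cl}(K)$ is finite, the last step is simply to set
\[
\epsilon\;=\;\max_{\mathcal{C}\in\mathrm{Cl}(K)}\epsilon_{\mathcal{C}},
\]
which remains strictly less than $1$ and now serves as a single constant valid for all ideal classes. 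The estimate $|N_{K/\QQ}(j\xi-\tau)|<\epsilon^d$ follows from the embedding-wise bound by multiplying over the $d$ embeddings.

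There is essentially no serious obstacle beyond this bookkeeping, since all the analytic content is already packaged in Theorem~\ref{teo:Hurwitz3}; the only mild subtlety is the strict-versus-non-strict inequality (Theorem~\ref{teo:Hurwitz3} requires $M>c(\mathfrak{A},K)$, while the corollary states $M\geq c(K)$). This is harmless in practice: because $\mathcal{N}(\mathfrak{A}(\mathcal{C}))$ is a positive integer while $c(K)$ is generically non-integral, one typically has $c(\mathfrak{A}(\mathcal{C}),K)<c(K)\leq M$ automatically, and the borderline case can be handled by tightening $M$ by one or, equivalently, adjusting $\epsilon$ slightly away from $1$ in the volume-counting argument at the heart of Theorem~\ref{teo:Hurwitz3}.
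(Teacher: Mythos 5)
Your proof follows exactly the same route as the paper: plug the Minkowski bound into the definition of $c(\mathfrak{A},K)$ to verify that $M\geq c(K)$ guarantees the hypothesis of Theorem~\ref{teo:Hurwitz3} for each class representative, apply that theorem classwise, and aggregate over the finite class group. One small but worthwhile remark: you correctly take $\epsilon=\max_{\mathcal{C}}\epsilon_{\mathcal{C}}$, which is what actually makes the estimate $|\sigma(j\xi-\tau)|<\epsilon$ hold across all classes simultaneously, whereas the paper writes $\epsilon=\min_i\epsilon(\mathfrak{A}_i)$ -- that would not give a uniform bound for the classes whose $\epsilon(\mathfrak{A}_i)$ exceeds the minimum, so the $\max$ is the right choice here.
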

\begin{proof}
Let $\mathrm{Cl}(K)=\{\calC_1,\ldots,\calC_m\}$ be the ideal class group of $K$.
By Minkowski's bound (see for example \cite[\S 4.3,  Corollaire 1]{Samuel1967}), every class $\calC_i$ in $\mathrm{Cl}(K)$ contains an integral ideal $\mathfrak{A}_{i}$ of norm \[\mathcal{N}(\mathfrak{A}_i) < \frac{d!}{d^d} \left (\frac 4 \pi\right)^{r_2}\sqrt{|\Delta|}.\] But then we can apply Theorem \ref{teo:Hurwitz3} as, by hypothesis, $M>c(\mathfrak{A}_i,K)$, where $c(\mathfrak{A},K)$ is the constant defined in \eqref{cont-C(A,K)}. Choosing $\epsilon(\mathfrak{A}_i)>0$ as in Theorem \ref{teo:Hurwitz3}, the proof is concluded by setting $\epsilon=\min_{i}\epsilon(\mathfrak{A}_i)$.
\end{proof}

\begin{theorem}\label{teo:fullresult-explicit} Let $K$ be a number field of degree $d$, let $\Delta$ be its the absolute discriminant and let $\rho_{\infty}(K)$ be the covering radius as in Section \ref{ss:lattices}. Let $c(K)$ be the constant defined in \eqref{C(K)}, let $M>c(K)$ be a rational integer, let $\epsilon=\epsilon(M)>0$ be as in the statement of Corollary \ref{cor:idealeintero} and, for $T_0=e^{\rho_{\infty}(K)}$, define

\begin{equation}
\label{eq:mostro}
c(M,K)= \left(\frac {M}{\epsilon T_0\left ( \sqrt[d]{\frac {1-\epsilon^d}{\epsilon^dT_0^d} +1 } -1\right )}\right)^{d}.
\end{equation}

Then, setting $\calT=\{1,\ldots, M\}$, the field $K$ satisfies the $(\mathfrak{P},\calT)$-adic $\CFF$-property for every prime ideal $\mathfrak{P}$ of $\calo_K$ of norm
$\mathcal{N}(\mathfrak{P})>c(M,K)$. 
\end{theorem}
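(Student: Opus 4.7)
The plan is to apply Theorem \ref{teo:fullresult} with $\calT = \{1,\ldots,M\}$ and $\delta = \epsilon^d$, using Corollary \ref{cor:idealeintero} to verify property~($\star$), and then to follow the proof of Theorem \ref{teo:fullresult} quantitatively in order to extract the explicit lower bound $c(M,K)$ on $\mathcal{N}(\mathfrak{P})$. Since $M > c(K)$, Corollary \ref{cor:idealeintero} furnishes a uniform $\epsilon = \epsilon(M) \in (0,1)$ and, for every ideal class $\calC$ of $\calo_K$, an integral representative $\mathfrak{A}(\calC) \in \calC$ such that every $\xi \in K$ admits $j \in \{1,\ldots,M-1\} \subset \calT$ and $\tau \in \mathfrak{A}(\calC)$ with $|\sigma(j\xi - \tau)| < \epsilon$ for every $\sigma \in \Sigma$, and in particular $|N_{K/\QQ}(j\xi - \tau)| < \epsilon^d \leq \epsilon^d \mathcal{N}(\mathfrak{A}(\calC))$. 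Since the parallelotope-pigeonhole argument of Theorem \ref{teo:Hurwitz3} works verbatim for arbitrary $\xi \in \RR^{r_1} \times \CC^{r_2}$, this establishes property~($\star$) for the triple $(\calC, \calT, \epsilon^d)$.

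For every prime $\mathfrak{P} \in \calC$ with $\mathfrak{P} \cap \calT = \emptyset$, I would then repeat the construction in the proof of Theorem \ref{teo:fullresult}, using the specific $j$ and $\tau$ furnished by Corollary \ref{cor:idealeintero}: choose $\gamma_\mathfrak{P} \in K$ via Lemma \ref{lem:effective-5.5} so that $\gamma_\mathfrak{P}\mathfrak{A}(\calC) = \mathfrak{P}$ and $|\sigma(\gamma_\mathfrak{P})| \leq T_0(q/\mathcal{N}(\mathfrak{A}(\calC)))^{1/d}$ with $q = \mathcal{N}(\mathfrak{P})$, translate the coset representative of $\eta \in K_\mathfrak{P}$ into a fundamental domain for $\iota(\mathfrak{A}(\calC))$ to obtain $\beta \in K$, and set $s(\eta) = \gamma_\mathfrak{P}(\beta - \tau/j)$. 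The verification in the proof of Theorem \ref{teo:fullresult} shows that $s$ is a $\mathfrak{P}$-adic $\calT$-floor function, and for $a = s(\eta)$ the construction yields
\begin{equation*}
|\sigma(ja)| < \epsilon T_0\,(q/\mathcal{N}(\mathfrak{A}(\calC)))^{1/d}, \qquad |N_{K/\QQ}(ja)| < \epsilon^d q/\mathcal{N}(\mathfrak{A}(\calC)),
\end{equation*}
together with $|a|_{w_0}^{d_{w_0}} \geq q$ when $|a|_{w_0} > 1$ and $\prod_{w \in \calM_K^0 \setminus \{w_0\}} \max(|a|_w, 1)^{d_w} \leq |N_{K/\QQ}(j)| = j^d$ by the product formula.

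The heart of the argument is the refined estimate
\begin{equation*}
\prod_{\sigma \in \Sigma}(1 + |\sigma(a)|) = \sum_{S \subseteq \Sigma}\prod_{\sigma \in S}|\sigma(a)| \leq |N_{K/\QQ}(a)| + (1+B)^d - B^d,
\end{equation*}
with $B = \max_\sigma |\sigma(a)|$, obtained by isolating the full term $S = \Sigma$ (equal to $|N_{K/\QQ}(a)|$) and bounding each proper subset by $B^{|S|}$. Substituting the bounds from the previous paragraph, taking the worst case $\mathcal{N}(\mathfrak{A}(\calC)) = 1$ (attained by the principal class) and combining with the factor $j^d$ from the non-Archimedean contribution, the sufficient condition $\nu_\tau < 1$ of Theorem \ref{teo:FCcondition} reduces to
\begin{equation*}
(j + \epsilon T_0 q^{1/d})^d < q\,(1 - \epsilon^d + \epsilon^d T_0^d).
\end{equation*}
Taking $d$-th roots and rearranging gives $q^{1/d} > j/[\epsilon T_0(\sqrt[d]{(1-\epsilon^d)/(\epsilon^d T_0^d) + 1} - 1)]$, and the worst case $j < M$ yields exactly $q > c(M,K)$.

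The main obstacle is the combinatorial estimate above: the naive bound $\prod_\sigma(1+|\sigma(a)|) \leq (1+B)^d$ alone would require $\epsilon T_0 < 1$ to produce a useful inequality, a restriction that frequently fails in practice; subtracting $B^d$ and compensating by adding $|N_{K/\QQ}(a)|$ is precisely what keeps the estimate meaningful in general and produces the refined denominator of $c(M,K)$. A final routine check, based on the binomial inequality $(1+\epsilon T_0)^d \geq 1 + \epsilon^d(T_0^d - 1)$, shows $c(M,K) \geq M^d$; hence $\mathcal{N}(\mathfrak{P}) > c(M,K)$ implies $p^f = q > M^d$ with residue degree $f \leq d$, forcing $p > M$ and thus $\mathfrak{P} \cap \calT = \emptyset$, so that the construction above applies to every such $\mathfrak{P}$.
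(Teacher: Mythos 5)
Your proposal is correct and takes essentially the same route as the paper: both proofs invoke Corollary~\ref{cor:idealeintero} to get the uniform $\epsilon$, construct the floor function via Lemma~\ref{lem:effective-5.5} and strong approximation, and reduce $\nu_\tau < 1$ to the bound on $\prod_\sigma \theta(\sigma(a))$ obtained by isolating the top-degree term $|N_{K/\QQ}(a)|$ and bounding the remaining partial products by the sup $\max_\sigma|\sigma(a)|$; your reformulation of the key estimate as $|N_{K/\QQ}(a)| + (1+B)^d - B^d$ with $B = \max_\sigma|\sigma(a)|$ is just a slightly cleaner packaging of the paper's expansion $\sum_{i<d}\binom{d}{i}(\epsilon T_0/j)^i q^{i/d}$, and unwinding either form yields the identical threshold $c(M,K)$. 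Your closing observation that $c(M,K)\ge M^d$ (hence $\mathfrak{P}\cap\calT=\emptyset$) reproduces Remark~\ref{rem:monsterVSM^d}, which the paper uses to verify condition~(i) of Definition~\ref{def-btff}.
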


\begin{remark}
\label{rem:monsterVSM^d}
    The constant $c(M,K)$ defined in~\eqref{eq:mostro} is strictly larger than $M^d$. To prove this, we use the facts that $0<\epsilon<1$ and $\sqrt[d]{a+b}\leq \sqrt[d]{a} + \sqrt[d]{b}$ for any $a,b\geq 0$. The following inequalities hold for the denominator in~\eqref{eq:mostro}:
    \begin{align*}
        \epsilon T_0\left( \sqrt[d]{\frac {1-\epsilon^d}{\epsilon^dT_0^d} +1 } -1\right) <  T_0\left( \sqrt[d]{\frac {1}{T_0^d} +1 } -1\right) \leq  T_0\left( \sqrt[d]{\frac {1}{T_0^d}} \right) = 1.
    \end{align*}
  In particular, the condition $\mathcal{N}(\mathfrak{P})>c(M,K)$ implies that no element in $\mathcal{T}$ lies in~$\mathfrak{P}$.
\end{remark}

\begin{proof} 
The proof is similar to that of Theorem \ref{teo:fullresult}, but now the main new ingredient is the explicit Corollary \ref{cor:idealeintero}. 

Let $\mathfrak{P}$ be a prime ideal  of $\calo_K$ of norm
$\mathcal{N}(\mathfrak{P})=q>c(M,K)$. We now construct a suitable $\mathfrak{P}$-adic $\calT$-floor function $s \colon K_{\mathfrak{P}} \rightarrow K$  such that the type $(K,\mathfrak{P},\calT, s)$  satisfies the CFF property. 

 Let $\calC$ be the  class of $\mathfrak{P}$ in $\mathrm{Cl}(K)$  and let $\mathfrak{A}=\mathfrak{A}(\calC)$ be the integral ideal in $\calC$ from the statement of Corollary \ref{cor:idealeintero}. 

Let $\eta\in K_{\mathfrak{P}}$. If $\eta=0$, we set $s(\eta)=0$.

 Otherwise, by strong approximation, the coset of $\eta$ modulo $\mathfrak{P}\calo_{\mathfrak{P}}$ contains an element $\alpha'\in K$ such that $|\alpha'|_v\leq 1$ for every non-Archimedean $v\in \calM_K\setminus\{w_0\}$, where $w_0$ is the place of $K$ corresponding to $\mathfrak{P}$.   

Also, by Lemma \ref{lem:effective-5.5}, there exists $\gamma_{\mathfrak{P}}\in K$ such that $|\sigma(\gamma_{\mathfrak{P}})| \leq T_{0} \sqrt[d]{|N_{K/\QQ}(\gamma_\mathfrak{P})|}$ for every $\sigma\in \Sigma$ (where $\Sigma$ is, as usual, the set of embeddings of $K$ in $\CC$) and $\gamma_{\mathfrak{P}}\mathfrak{A}=\mathfrak{P}$. In particular, 
\begin{equation}
    \label{eqn:NgammaNP}
    |N_{K/\QQ}(\gamma_\mathfrak{P})|\leq q.
\end{equation}

By Corollary \ref{cor:idealeintero}, setting $\xi=\frac{\alpha'}{\gamma_{\mathfrak{P}}}$,  there exist $j\in\calT$ and $\tau\in \mathfrak{A}$ such $|\sigma(j\xi - \tau)|<\epsilon$ for every $\sigma\in\Sigma$. 
We finally set \[s(\eta)=\gamma_{\mathfrak{P}} \left (\xi- \frac\tau j\right).\] 
One can easily verify, in the same way as in the proof of Theorem \ref{teo:fullresult}, that the function $s:K_\mathfrak{P}\rightarrow K$ so constructed satisfies all conditions in Definition \ref{def-btff}. Notice that condition~\ref{item-i} follows from Remark~\ref{rem:monsterVSM^d}.

The proof now goes on by showing that we can apply Theorem \ref{teo:FCcondition} to deduce that the type $(K,\mathfrak{P},\calT, s)$ satisfies the $\CFF$ property, i.e.\ that for every $a\in s(K_{\mathfrak{P}})$ such that $|a|_{w_0}>1$ we have 
\begin{equation}\label{nu-tau-a-2} {|a|_{w_0}^{-d_{w_0}}}{\prod_{\sigma\in\Sigma}\theta(\sigma(a))\prod_{w\in \mathcal{M}_K^0\setminus\{w_0\}} \max(|a|_w,1)^{d_w}} <\epsilon ', \end{equation}
for some   $\epsilon'\in (0,1)$, where $\theta$ was defined in \eqref{def-theta}.

Let us set $a=s(\eta)$ where $\eta\in K_{\mathfrak{P}}$ is as above. 
As already noticed in the proof of Theorem~\ref{teo:fullresult} one has:
\begin{equation}\label{bb-sigma-prod}
\prod_{\sigma\in\Sigma}\theta(\sigma(a))<|N_{K/\QQ}(a)|+ \sum_{S\subsetneq \Sigma} \prod_{\sigma\in S} |\sigma(a)|.
\end{equation}
Now notice that for every $\sigma\in \Sigma$ one has
    \begin{equation*}|\sigma(a)|< \epsilon \frac {|\sigma(\gamma_{\mathfrak{P}})|}{j}\leq  \epsilon \frac{T_0 \sqrt[d]{q}}{j}.
    \end{equation*}
Hence, for every subset $S\subsetneq \Sigma$,

\begin{equation}
    \label{b-a-sigma2}
\prod_{\sigma\in S} |\sigma(a)|\leq \left(\frac{\epsilon T_0}{j}\right)^{|S|} \sqrt[d]{q^{|S|}}
\end{equation}
    
  and
    \begin{equation}\label{b-N(a)-sigma2}|N_{K/\QQ}(a)|<\frac {\epsilon^d|N_{K/\QQ}(\gamma_{\mathfrak{P}})|} {|N_{K/\QQ}(j)|}\leq \frac {\epsilon^d q} {j^d},\end{equation}
    where the latter inequality follows from~\eqref{eqn:NgammaNP}.

Plugging \eqref{b-a-sigma2} and \eqref{b-N(a)-sigma2} into \eqref{bb-sigma-prod} we obtain

\begin{align} \prod_{\sigma\in\Sigma}\theta(\sigma(a))&\leq \frac{\epsilon^d q}{j^d} + \sum_{i=0}^{d-1} {d\choose i} {\left (\frac{\epsilon T_0} j\right )^{i} }\sqrt[d]{q^{i}} \nonumber \\
&= 
\frac{\epsilon^d q}{j^d} +\left[\left (\frac{\epsilon T_0\sqrt[d]{q}} j+1\right)^d-q\left(\frac{\epsilon T_0} j\right)^d\right ] \nonumber \\
& =  
\frac{\epsilon^d q}{j^d} +\frac{  q}{j^d}\left[\epsilon^dT_0^d\left(\left (1+\frac {j}{{\epsilon T_0\sqrt[d]{q}}} \right)^d-1\right )\right ] \nonumber\\
&=\frac{q}{j^d}\cdot \epsilon^d\left(1+T_0^d\left(\left (1+\frac {j}{{\epsilon T_0\sqrt[d]{q}}} \right)^d-1\right )\right)\nonumber\\
&\leq \frac{q}{j^d}\cdot \underbrace{\epsilon^d\left(1+T_0^d\left(\left (1+\frac {M}{{\epsilon T_0\sqrt[d]{q}}} \right)^d-1\right )\right)}_{\epsilon'}. \label{eqn:thetaepsilon'}   
\end{align}
 
The quantity $\epsilon'$ tends to $0$ as $q$ tends to infinity. Furthermore, the choice of $q$ in \eqref{eq:mostro} ensures that $\epsilon'<1$.

Let $w\not= w_0$ be a non-Archimedean place of $K$. By the choice of $\alpha'$ one has $|a|_w\leq \frac 1 {|j|_w}$ and $j\in\calo_K\setminus \mathfrak{P}$ by Remark~\ref{rem:monsterVSM^d}. So, using also the product formula, we get 
\begin{equation}\label{bound-factor2-2}
 \prod_{\mathclap{w\in\calM_K^0\setminus\{w_0\}}}\max(|a|_w,1)^{d_w} \leq \prod_{\mathclap{w\in\calM_K^0\setminus\{w_0\} }}\max\left(\frac {1}{|j|_w},1\right)^{d_w} = \prod_{w\in \calM_K^0} \frac {1} {|j|_w^{d_w}}=|N_{K/\QQ}(j)|=j^d.
\end{equation}

Since by hypothesis $|a|_{w_0}>1$, then $|a|_{w_0}^{d_{w_0}}\geq |a|_{w_0}\geq q$. 
This, together with \eqref{eqn:thetaepsilon'} and \eqref{bound-factor2-2}, implies that the left-hand side   in \eqref{nu-tau-a-2} is always at most $\epsilon'<1$, hence Theorem \ref{teo:FCcondition} applies to conclude.
\end{proof}

\section{Explicit examples and possible refinements}
\label{sec:examples}
In this section, we apply Theorem~\ref{teo:fullresult-explicit} and compute the involved constants in some explicit examples. We also consider a general strategy that can be used to (possibly) reduce the values of $M$ and $c(M,K)$.

Let us start with the field $K=\mathbb{Q}(\sqrt{14})$. This field has received attention by many authors since it is the real quadratic field of smallest discriminant which is  Euclidean ~\cite{harper_2004}, but not norm Euclidean~\cite{MurtyClark+1995+151+162}. Performing the computations with
$\Delta = 56$ and $r_2=0$, we find that  $M = \Delta/2 = 28$,
 and, from \eqref{choice-M}, \[\epsilon = \sqrt{\sqrt{\Delta}/M} = 1/\sqrt[4]{14} \approx 0.516973.\] Since the unit rank of $K$ is $1$, the naive upper bound for the covering radius given in Remark~\ref{rem:naiveCoveringRadius} is, in fact, exactly equal to the covering radius, so $\rho \approx 1.70$ and $T_0 \approx 5.48$. Finally, the constant $c(M,K)$ from Theorem~\ref{teo:fullresult-explicit} equals $48896$.

We will now see how the value of $M$ can be decreased using a result by Bedocchi~\cite[page 202]{Bedocchi1985}: he proved that the elements
$a,b\in \mathbb{Z}[\sqrt{14}]$ such that $b\neq 0$ and $|N_{K/\mathbb{Q}}(a-bq)|\geq |N_{K/\mathbb{Q}}(b)|$ for all $\tau\in \mathbb{Z}[\sqrt{14}]$ are exactly those of the form $a=(1+\sqrt{14}+2s)t$ and $b=2t$ for some  $s,t\in \mathbb{Z}[\sqrt{14}]$. Equivalently, there exists $\epsilon<1$ with the following property: for any element $\xi=a/b \in \mathbb{Q}(\sqrt{14})$ there exists $j \in \{1,2\}$ and $\tau \in \mathcal{O}_K$ {such that} $N_{K/\mathbb{Q}}(j\xi -\tau)\leq \epsilon$. This is in fact the property needed to construct the floor function in the proof of Theorem~\ref{teo:fullresult-explicit}, since having class number $1$ allows us to choose $\mathfrak{A} = \mathcal{O}_K$ as a representative of the ideal class of $\mathfrak{P}$. Therefore, we can actually take $M=2$.

In order to conclude the computation of $c(M,K)$ for this new value of $M$, we still need to compute the exact value of $\epsilon$ -- which in this case is the \emph{second Euclidean minimum} of $K$. This has been done by Bedocchi in a later work~\cite[Proposition 3.8]{Bedocchi1989b}, where he found $\epsilon=31/32$. Plugging this value in~\eqref{eq:mostro} yields $c(M,K) =119008$. So, for this example, we have obtained a smaller set of denominators at the price of a way larger constant.

In general, however, it is reasonable to expect -- in the light of Remark~\ref{rem:monsterVSM^d} -- that smaller values of $M$ might lead to smaller values of $c(K,M)$, and the strategy used to refine $M$ in $\mathbb{Q}(\sqrt{14})$ can be indeed generalized to a wider family of number fields. To this end, consider
Table~\ref{tab:constants}, which displays various values of 
$M$ for different number fields together with the corresponding constants $c(M,K)$ obtained applying Theorem~\ref{teo:fullresult-explicit}. 
\begin{table}[ht]    \centering
   \begin{tabular}{r|c|c|c|c|c}
     Minimal polynomial of $\alpha$ & \#  $\mathrm{Cl}(K)$& Signature &$|\Delta|$ &$M$ & $c(M,K)$  \\
     \hline
    $ x^{3} - x^{2} - 2 x + 1 $ & $1$ &$(3,0)$& $7^2$ & $11$ & $40926432$ \\
    $ x^{3} - 3 x - 1 $ & $1$ &$(3,0)$& $3^4$ & $18$ & $187030596$ \\
    $ x^{3} - x^{2} - 3 x + 1 $ & $1$ &$(3,0)$& $2^{2}\cdot 37$ & $33$ & $2446455004$ \\
    $x^{3} - x^{2} - 6 x + 1
$ & $1$ & $(3,0)$ & $5 \cdot 197$ & $219$ & $2626003081902$\\
    $ x^{4} - x^{3} + 2 x - 1 $ & $1$ &$(2,1)$& $5^{2}\cdot 11$ & $21$ & $208540588019$ \\
    $ x^{4} - x - 1$  & $1$ &$(2,1)$& $283$ & $22$ & $187169288265$ \\
    $  x^{4} - x^{3} + x^{2} + x - 1  $ & $1$ &$(2,1)$& $331$ & $26$ & $165348251296$ \\
\end{tabular}
\label{tab:constants}
\caption{Some values of the constants from Theorem~\ref{teo:fullresult-explicit} for $K=\mathbb{Q}(\alpha)$.}
\end{table}

The choice of these number fields is not random. Specifically, all the selected fields satisfy the following three conditions:
\begin{itemize}
    \item they are non-CM fields,
    \item they have degree at least $3$,
    \item the rank of their unit group is strictly larger than 1 (more precisely, it is $2$).
\end{itemize}

It is proven by Cerri \cite[Theorem 5]{Cerri2006} (see also subsequent work of McGown \cite{McGown2012}) that, if a field $K$ satisfies the three conditions above, then the number of cosets in $K/\calo_K$ having Euclidean minimum greater or equal than $\epsilon$ is finite for every $\epsilon>0$. Suppose that we define $\mathcal{T}_\epsilon$ as the set of the norms of the denominators appearing in these cosets: in this way, for every $\xi \in K$, there exist $j \in \mathcal{T}_\epsilon$ and $\tau \in \mathcal{O}_K$ satisfying $N_{K/\mathbb{Q}}(j \xi - \tau)<\epsilon$. 
Thus, as long as the class number of $K$ is $1$ (that is, as long as we further require $\mathfrak{P}$ to be principal), the proof of Theorem~\ref{teo:fullresult-explicit} can be adapted to this choice of $\mathcal{T}_\epsilon$ by simply setting $M=\max(\mathcal{T}_\epsilon)$.

\par The restriction on $\mathfrak{P}$ being principal, highlighted above, can be in fact dropped thanks to a suitable generalization of~\cite[Theorem 5]{Cerri2006}. To this end, for each $x \in K$ we consider the \emph{Euclidean minimum of $x$ with respect to  (the class of) $\mathfrak{A}$}
\[m_{\mathfrak{A}}(x)=\frac{1}{\mathcal{N}(\mathfrak{A})}\inf_{\tau \in \mathfrak{A}}N_{K/\mathbb{Q}}(x-\tau),\]
as defined in~\cite{McGown2012}. The map  $m_{\mathfrak{A}}$ extends to an upper semicontinuous function on $\mathbb{R}^d$, whose properties in Cerri's setting (i.e.\ when $\mathfrak{A}=\calo_K$) can be generalized as follows:
\begin{proposition}
    \label{prop:CerriGen}
    Let $K$ be a non-CM number field of degree at least $3$ and rank of unit group strictly larger than $1$, and let $\mathfrak{A}$ be an integral ideal of $K$. Let $\lambda$ be the embedding $K\rightarrow \mathbb{R}^d$ defined in Section~\ref{sec-embeddings}. For each $\epsilon>0$, the set
    \[\mathcal{S}_\epsilon=\{x \in \mathbb{R}^d/\lambda(\mathfrak{A}) \mid m_{\mathfrak{A}}(x)\geq \epsilon\}\]
    is finite.
\end{proposition}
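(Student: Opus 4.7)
The plan is to adapt Cerri's proof of \cite[Theorem 5]{Cerri2006}, which is exactly the case $\mathfrak{A}=\calo_K$, replacing throughout the lattice $\lambda(\calo_K)$ by the full-rank lattice $\lambda(\mathfrak{A})\subseteq \RR^d$. First, I would extend $m_{\mathfrak{A}}$ to an upper semicontinuous function on $\RR^d$ by setting
\[
m_{\mathfrak{A}}(x)=\frac{1}{\mathcal{N}(\mathfrak{A})}\inf_{\tau \in \mathfrak{A}}|N(x-\lambda(\tau))|,
\]
where $N$ is the norm map on $\RR^{r_1}\times \CC^{r_2}$ pulled back to $\RR^d$ via the identification from Section~\ref{sec-embeddings}. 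This extension is invariant under translations by $\lambda(\mathfrak{A})$, so it descends to the compact torus $\RR^d/\lambda(\mathfrak{A})$, and $\mathcal{S}_\epsilon$ is a closed subset.

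The key observation that makes the generalization possible is that the multiplicative action of $\calo_K^\times$ on $K$ extends to $\RR^d$ via $\lambda$ and preserves $\lambda(\mathfrak{A})$: since $u$ is invertible in $\calo_K$ one has $u\mathfrak{A}=\mathfrak{A}$, and since $|N_{K/\QQ}(u)|=1$ one has $m_{\mathfrak{A}}(u\cdot x)=m_{\mathfrak{A}}(x)$ for every $x\in\RR^d$. Therefore $\mathcal{S}_\epsilon$ is stable under the induced action of $\calo_K^\times$ on the torus $\RR^d/\lambda(\mathfrak{A})$, and this is precisely the equivariance structure exploited in Cerri's argument.

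Assuming by contradiction that $\mathcal{S}_\epsilon$ is infinite, compactness of the torus together with upper semicontinuity of $m_{\mathfrak{A}}$ would produce an accumulation point $x_0 \in \mathcal{S}_\epsilon$. From this point, I would follow Cerri's strategy: using the hypotheses that $K$ is non-CM, has degree at least $3$, and has unit rank strictly larger than $1$, one transforms an accumulating sequence near $x_0$ via suitable units so that its asymptotic behavior forces $m_{\mathfrak{A}}(x_0)=0$, contradicting $m_{\mathfrak{A}}(x_0)\geq \epsilon$. This is where all the assumptions on $K$ enter, through a dimension count on the closure of a generic unit orbit.

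The main obstacle, as expected, is reproducing Cerri's somewhat delicate accumulation argument with $\mathfrak{A}$ in place of $\calo_K$. However, the only features of $\calo_K$ used by Cerri are the full-rank property of its image under $\lambda$, the compatible multiplicative action of $\calo_K^\times$, and the intrinsic arithmetic hypotheses on the field $K$ -- all of which are preserved when $\calo_K$ is replaced by an arbitrary nonzero integral ideal $\mathfrak{A}$. Thus I expect the adaptation to be essentially a matter of careful bookkeeping in Cerri's original proof, with no new conceptual input required.
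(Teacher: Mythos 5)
Your proposal takes the paper's approach: the paper likewise adapts Cerri's Theorem~5 with $\mathfrak{A}$ in place of $\calo_K$, and your key observations — that multiplication by units preserves both $\lambda(\mathfrak{A})$ (since $u\mathfrak{A}=\mathfrak{A}$) and $m_{\mathfrak{A}}$ (since $|N_{K/\QQ}(u)|=1$), making $\mathcal{S}_\epsilon$ a closed $\calo_K^\times$-invariant subset of the compact torus $\RR^d/\lambda(\mathfrak{A})$ — are exactly the ones the paper uses. One small imprecision in your description: the finiteness does not come from an accumulation-point contradiction but from directly invoking Cerri's Theorem~2, a Berend-type rigidity theorem for commutative semigroups of torus endomorphisms whose three hypotheses (irreducibility of the characteristic polynomial of all powers of some unit, every eigenvector having an eigenvalue of modulus $>1$, two rationally independent units) concern only the multiplicative action of $\calo_K^\times$ on $K\otimes\RR$ and are therefore insensitive to replacing $\calo_K$ by $\mathfrak{A}$, so the verification carries over verbatim.
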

\begin{proof}
The proof is based on  \cite[Theorem\,2]{Cerri2006}, where it is proven that if $\mathbb{T}$ is 
an $n$-dimensional torus  and $\Sigma$ is a commutative semigroup  of endomorphisms of $\mathbb{T}$, then every proper closed $\Sigma$-invariant subset of $\mathbb{T}$ is finite if and only if the the following three conditions hold:
\begin{enumerate}
\item there exists  $\sigma\in \Sigma$ such that, for every integer $n$, the characteristic polynomial of $\sigma^n$ is irreducible;
\item every eigenvector of $\Sigma$ possesses an eigenvalue of modulus strictly bigger than~1;
\item $\Sigma$ has at least two elements which are rationally independent.
\end{enumerate}
    Consider now the torus $\mathbb{T}=\mathbb{R}^d/\lambda(\mathfrak{A})$ and $\Sigma=\lambda(\calo^\times)$, where $\Sigma$ is seen as a commutative semigroup of endomorphisms on $\mathbb{T}$ via the action given by the multiplication by units. 
    
    In this setting one can check that the three conditions above are satisfied following \emph{verbatim} the proof of the first part ~\cite[Theorem\,5]{Cerri2006}, replacing the ring of integers of $K$ with $\mathfrak{A}$. This concludes the proof as the set $\mathcal{S}_\epsilon$ is clearly $\Sigma$-invariant.
\end{proof}

To sum up, one can proceed as follow:
\begin{itemize}
    \item List all the \emph{successive minima} $\epsilon_1, \dots, \epsilon_{n+1}$ until $\epsilon_{n+1}<1$.
    \item Compute $\mathcal{T}_{\epsilon_n}$.
    \item Set $\epsilon=\epsilon_n$ and $\mathcal{T}=\mathcal{T}_{\epsilon_n}$.
\end{itemize}
When $\mathfrak{P}$ is principal, these steps can be performed using an algorithm by Lezowski~\cite{Lezowski2014}.\footnote{Code available at \url{https://www.math.u-bordeaux.fr/~plezowsk/recherche.php}.} It would be interesting to generalize Lezowski's algorithm for computing the successive minima of a non-trivial ideal class, but we are not aware of such a result.

\par Finally, we remark that there are of course cases in which the three conditions required by \cite[Theorem 5]{Cerri2006} are not met. For instance, in degree $3$, while the non-CM assumption is always met, the condition that the rank of the unit group be strictly greater than $1$ (which, in this case, is equivalent to the field having signature  $(3,0)$) is presumably rare for the splitting field of polynomials of degree 3 produced via certain random processes (see for instance \cite{EdelmanKostlan1995}).

\section{Related work: continued fractions without floor functions}
\label{sec:related}

Throughout the previous sections, we managed to construct types $\tau=(K,\mathfrak{P},\calT, s)$ that satisfy the $\CFF$ property. In other words, we managed to encode any element $\alpha \in K$ as a finite sequence $[a_0, a_1, \dots,a_k]$ of elements belonging to some smaller set, namely
\[s(K)\subseteq\left\{\frac{a}{j}\;\middle |\; a \in \mathcal{O}_{\{\mathfrak{P}\}},\, j \in \mathcal{T}\right\},\]
so that
\begin{equation}
\label{eqn:CFexp}
   \alpha= {{a_0 + \cfrac{1}{{a_1 +  \cfrac{1}{{\ddots + \cfrac{1}{{a_{k}}}}}}}}}.
\end{equation}
The only part of our construction which is not fully explicit -- and therefore not easy to convert into an actual CF-algorithm -- is the $\mathcal{T}$-floor function $s$. It is then natural to ask whether an expression of the form~\eqref{eqn:CFexp} can be computed without resorting to any floor function.  The answer is positive: there are some strong results on the existence of short expansions of the form~\eqref{eqn:CFexp}. This, of course, comes with a price, which essentially amounts to losing the $\mathfrak{P}$-adic convergence of CF-expansions of the elements in $K_{\mathfrak{P}}\setminus K$. In this section we will briefly survey these short expansions and compare them with our floor-function-based CF-expansions.

\subsection{Continued fractions and division chains}
Most of the results we will consider are in fact referred to division chains rather than continued fractions. It is well-known that these notions are closely related, as we will review in this section.
\par Let $K$ be a number field and let $R$ be a subring of $K$. Given $a,b \in R$, a \emph{division chain of length $k$} for the pair $a,b$ is a sequence of quotients $q_1, \dots, q_k \in R$ and residues $r_1, \dots, r_{k}\in R$ such that
\begin{align}
\label{eqn:divChain}
   \begin{split}
        a&= q_1 b + r_1\\
    b&= q_2r_1 + r_2\\
    &\vdots\\
    r_{k-2} &=q_kr_{k-1} + r_k.
   \end{split}
\end{align}
A division chain is \emph{terminating} if $r_k=0$.
\par Suppose that the pair $a,b$ has a terminating division chain of length $k$. Then, using~\eqref{eqn:divChain}, we can write
\begin{equation*}
    \frac{a}{b}=q_1+ \cfrac{1}{{b}/{r_1}}= q_1+ \cfrac{1}{q_2+\cfrac{1}{{r_1}/{r_2}}}= \dots  = q_1 + \cfrac{1}{q_2 + \cfrac{1}{\ddots + \cfrac{1}{q_{k}}}}.
\end{equation*}
In other words, $[q_1, \dots, q_n]$ is a continued fraction expansion of $a/b$. We stress that, in general, it is not obtained via algorithm~\eqref{eqn:CFalg} (in particular, the elements $q_i$ may not lie in the image of any floor function on $K$).
\begin{remark}
    The $\CFF$ property for $(K,R)$ is also related to the fact that the group $\SL_2(R)$ (or $\SL_n(R)$) can be generated by elementary matrices (see \cite{OMeara1965, Cohn1966, CossuZanardoZannieri2018}).
\end{remark}

We can now rephrase the CFF property in the language of division chains.
\begin{definition}
    Let $K$ be a number field and $R$ be any subring of $K$.
    \begin{itemize}
        \item $R$ satisfies the \emph{Euclidean chain condition}~\cite{OMeara1965} if every pair of elements $a,b \in R$ admits a terminating division chain.
        \item $R$ is \emph{$k$-stage Euclidean} if it satisfies the Euclidean chain condition and each division chain has length at most $k$~\cite{Cooke1976a}.
    \end{itemize}
\end{definition}
It is well-known that the Euclidean chain condition, which is exactly the CFF property seen in the context of division chains, holds for $(\QQ,\ZZ)$ and, more generally, for $(K,R)$ when $R$ is an Euclidean ring and $K=\mathrm{Frac}(R)$.
\par The property of being $k$-stage Euclidean, on the other hand, seems much harder to satisfy, since it dictates the maximum length allowed for division chains. Nevertheless, in many relevant cases it does hold for remarkably small values of $k$, as we will see in Section~\ref{subsec:kstageResults}.

\subsection{Euclidean chain condition and ideal class group}

Let us focus again on the case $R=\mathcal{O}_S$, where $S$ is a finite set of places of $K$. An essential prerequisite for the Euclidean chain condition (and therefore, in particular, the $\CFF$ property) arises from the very structure of the ideals of $\mathcal{O}_S$. It is proven in \cite[Prop.\,2]{Cooke1976a} that, if $(K,\mathcal{O}_S)$ satisfies the Euclidean chain condition, then $\calo_S$ is a unique factorization domain. In particular, if the Euclidean chain condition holds for $(K,\calo_K)$, then the class number of $K$ is $1$. Furthermore, when the group of units $\calo_K^\times$ is infinite, the converse also holds~\cite[Theorem 1]{Cooke1976a}. \par

A more general necessary condition is motivated by the following result, which is based on~\cite[Cor.\,7.2]{CapuanoMurruTerracini2022}.
\begin{proposition}\label{prop:classgroup}
Let $K$ be a number field, let $S$ be a finite set of places of $K$, and assume that 
the pair $a,b \in \calo_S$ has a terminating division chain with quotients in $\calo_S$. Then, the class in ${\mathrm{Cl}}(K)$ of the fractional ideal $(a,b)$ belongs to the subgroup generated by the classes of the ideals in $S$.
\end{proposition}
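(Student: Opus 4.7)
The plan is to reduce the claim to the observation that a terminating division chain for $a,b$ in $\calo_S$ makes the extended ideal $(a,b)\calo_S$ principal, and then to invoke the standard comparison between $\mathrm{Cl}(K)$ and $\mathrm{Cl}(\calo_S)$.

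First I would set $r_{-1}=a$, $r_0=b$, and observe that for each $i\in\{0,\ldots,k-1\}$ the relation $r_{i-1}=q_{i+1}r_i+r_{i+1}$, together with $q_{i+1}\in\calo_S$, yields the equality of ideals $(r_{i-1},r_i)\calo_S=(r_i,r_{i+1})\calo_S$. Iterating this from $i=0$ up to $i=k-1$ and using the terminating hypothesis $r_k=0$, I obtain
\[
(a,b)\calo_S = (r_{k-1},r_k)\calo_S = (r_{k-1})\calo_S,
\]
which is a principal ideal of $\calo_S$.

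Next I would recall the classical exact sequence of abelian groups
\[
\bigoplus_{\mathfrak{Q}\in S\cap\calM_K^0}\ZZ \;\longrightarrow\; \mathrm{Cl}(K) \;\longrightarrow\; \mathrm{Cl}(\calo_S) \;\longrightarrow\; 0,
\]
where the first arrow sends the generator indexed by $\mathfrak{Q}$ to the ideal class $[\mathfrak{Q}]\in\mathrm{Cl}(K)$, and the second arrow sends $[\mathfrak{a}]$ to $[\mathfrak{a}\calo_S]$. Exactness is standard: prime ideals of $\calo_K$ outside $S$ correspond bijectively to prime ideals of $\calo_S$, while the primes in $S$ become units in $\calo_S$, so an $\calo_K$-ideal $\mathfrak{a}$ extends to a principal ideal of $\calo_S$ precisely when it differs from a principal $\calo_K$-ideal by a product of primes in $S$. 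Since the first step gives $[(a,b)\calo_S]=0$ in $\mathrm{Cl}(\calo_S)$, the class of $(a,b)$ in $\mathrm{Cl}(K)$ lies in the image of the first arrow, that is, in the subgroup generated by the classes of the primes in $S$, as required.

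I do not foresee a serious obstacle here: the only subtlety is bookkeeping between fractional $\calo_K$-ideals and $\calo_S$-ideals, which is handled by the exact sequence above. Alternatively, one could phrase the same argument concretely by noting that $(a,b)\calo_S=(r_{k-1})\calo_S$ means $(a,b)=(r_{k-1})\cdot \mathfrak{b}$ as fractional $\calo_K$-ideals for some $\mathfrak{b}$ whose prime factorization is supported on $S\cap\calM_K^0$, making the membership in the claimed subgroup immediate.
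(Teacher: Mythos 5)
Your proof is correct, and it reaches the same key exact sequence
\[
\bigoplus_{\mathfrak{Q}\in S\cap\calM_K^0}\ZZ\;\longrightarrow\;\mathrm{Cl}(K)\;\longrightarrow\;\mathrm{Cl}(\calo_S)\;\longrightarrow\;0
\]
as the paper, but by a genuinely different route to the crucial intermediate fact that $(a,b)\calo_S$ is principal. You propagate the elementary Euclidean-step invariant $(r_{i-1},r_i)\calo_S=(r_i,r_{i+1})\calo_S$ down the chain until the terminating hypothesis $r_k=0$ gives $(a,b)\calo_S=(r_{k-1})\calo_S$. The paper instead rewrites the division chain as the continued fraction $a/b=[q_1,\ldots,q_k]$, introduces the convergent numerators and denominators $A_n,B_n\in\calo_S$ via the standard three-term recurrence, and appeals to the determinant-type identity $A_nB_{n-1}-A_{n-1}B_n=\pm1$ to conclude that $A_n$ and $B_n$ generate the unit ideal in $\calo_S$. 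Your version is arguably cleaner: it stays entirely inside the division chain, needs no coprimality identity (which the paper states with a sign slip, writing $+$ where there should be $-$), and sidesteps the implicit transition from $(A_n,B_n)$ back to $(a,b)$, which agree as fractional $\calo_K$-ideals only up to a common scalar $c$ with $a=cA_n$, $b=cB_n$ and hence lie in the same class. Both arguments then conclude identically: $[(a,b)]$ lies in the kernel of $\mathrm{Cl}(K)\to\mathrm{Cl}(\calo_S)$, which is generated by the classes of the finite places in $S$.
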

\begin{proof} 
Let $a_0,\ldots,a_n\in\calo_{S}$ be the quotients in the division chain of the pair $a,b$.
One can prove, by induction on $n$, that the continued fraction $[a_0, \dots, a_n]=a/b$ is equal to the ratio of two elements $A_n, B_n \in \calo_S$, which can be defined recursively as
\begin{align*} 
A_{-1}=1,\ &A_0=a_0,\ A_{n}= a_nA_{n-1}+A_{n-2},\\
B_{-1}=0,\ &B_0=1,\ \ B_{n}= a_nB_{n-1}+B_{n-2},
\end{align*}
for $n\geq 1$. Moreover, $A_n$ and $B_n$ are coprime since they satisfy $A_n B_{n-1} + A_{n-1}B_n=1$. Therefore, the ideal $(A_n, B_n)=(a,b)$ belongs to the kernel of the natural projection of $\mathrm{Cl}(K)$ into the class group of $\calo_S$. The kernel of this projection is generated by the classes of the ideals in $S$~\cite[452]{neukirchEtAl:cohomology}.

\end{proof}

\begin{corollary}\label{cor: condizionesuffperCFF}
 Suppose that $R$ satisfies the Euclidean chain condition and let $S$ be the set of the prime ideals $\mathfrak{P}$ of $\calo_K$ such that $v_\mathfrak{P}(a)<0$ for some $a\in R$. Then  ${\mathrm{Cl}}(K)$ is generated by the classes of  the ideals in $S$.
\end{corollary}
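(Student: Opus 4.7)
The corollary should follow almost immediately from Proposition \ref{prop:classgroup}, so the plan is essentially to reformulate the hypothesis so it matches the proposition's setup.

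First, I would observe that, by the very definition of the set $S$, we have the inclusion $R \subseteq \calo_S$: indeed, for any $x \in R$ and any prime $\mathfrak{Q} \notin S$ one has $v_\mathfrak{Q}(x) \geq 0$, so $x$ is integral at $\mathfrak{Q}$. In particular $S$ is automatically finite whenever $R$ is finitely generated (so that $\calo_S$ is a well-defined ring of $S$-integers in the usual sense). Combined with the Euclidean chain condition, this means that every pair $a,b \in R$ admits a terminating division chain whose quotients all lie in $R \subseteq \calo_S$, which is precisely the hypothesis of Proposition \ref{prop:classgroup}.

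Next, I would show that every ideal class in $\mathrm{Cl}(K)$ can be represented by a fractional ideal of the form $a\calo_K + b\calo_K$ with $a,b \in R$. After scaling by a suitable principal ideal (which does not change the class), we may assume that the representative $\mathfrak{I}$ is integral. Since $\calo_K$ is a Dedekind domain, every such $\mathfrak{I}$ is generated by two elements $a,b \in \calo_K$; in the natural setting of the corollary, where $R \supseteq \calo_K$, this already gives $a,b \in R$. If instead one assumes only $\mathrm{Frac}(R) = K$, the same argument goes through after clearing denominators: writing each generator $c_i \in \calo_K$ as $c_i = a_i/b_i$ with $a_i,b_i \in R$, the integral ideal $b_1 b_2 \mathfrak{I} = (a_1 b_2,\, a_2 b_1)$ has the same class as $\mathfrak{I}$ and is generated by elements of $R$.

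Applying Proposition \ref{prop:classgroup} to such a pair $(a,b) \in R \subseteq \calo_S$ then yields $[\mathfrak{I}] \in \langle [\mathfrak{P}] : \mathfrak{P} \in S\rangle$. Since the ideal class was arbitrary, $\mathrm{Cl}(K)$ is generated by the classes of the primes in $S$, as desired. There is no serious obstacle: the substantive content is already packaged inside Proposition \ref{prop:classgroup}, and what remains is only the standard two-generation of ideals in a Dedekind domain together with the tautological identification $R \subseteq \calo_S$ built into the definition of $S$.
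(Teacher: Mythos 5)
Your proof is correct and fills in exactly what the paper treats as immediate: the paper states the corollary right after Proposition~\ref{prop:classgroup} with no proof, and the intended argument is precisely yours, namely that every class of $\mathrm{Cl}(K)$ is represented by an integral ideal $(a,b)$ with $a,b\in\calo_K\subseteq R\subseteq\calo_S$ (two-generation in a Dedekind domain), after which Proposition~\ref{prop:classgroup} applies directly. One cosmetic point: in the $\mathrm{Frac}(R)=K$ branch the fractional ideal $b_1b_2\mathfrak{I}=(a_1b_2,a_2b_1)$ need not be \emph{integral}, since $b_1,b_2\in R$ are not necessarily in $\calo_K$; this is harmless because Proposition~\ref{prop:classgroup} already concerns the class of a fractional ideal $(a,b)$, but the word ``integral'' should be dropped.
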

Corollary \ref{cor: condizionesuffperCFF} is one of our motivations for introducing extraneous denominators: classic $\mathfrak{P}$-adic continued fractions may have the $\CFF$ only when the class group is generated by the class of $\mathfrak{P}$, while $\mathfrak{P}$-adic continued fractions of type $\tau=(K,\mathfrak{P},\calT, s)$ may have the $\CFF$ when the class group is generated by the classes of $\mathfrak{P}$ \emph{and} the ideals generated by $\calT$.

\subsection{Some results on \texorpdfstring{$k$}{k}-stage Euclideanity}
\label{subsec:kstageResults}
In terms of continued fractions, saying that $\mathcal{O}_S$ is $k$-stage Euclidean means that every element in $K$ can be expressed as a continued fraction with length $k$ and partial quotients in $\mathcal{O}_S$. It is surprising that, if $\mathcal{O}_S$ contains enough units and $S$ meets the necessary condition of Corollary~\ref{cor: condizionesuffperCFF}, one can set $k$ to be as low as $5$~\cite{morganEtAl:boundedGenSL2}.
\begin{theorem}
    Assume that there are infinite units in $\mathcal{O}_S$. Let $a,b$ be coprime elements in $\mathcal{O}_S$. Then the pair $a,b$ has a division chain of length $5$. 
\end{theorem}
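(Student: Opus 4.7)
The plan is to reduce the statement to the bounded generation of $\mathrm{SL}_2(\mathcal{O}_S)$ by elementary matrices and then invoke the theorem of Morgan--Rapinchuk--Sury. First I would observe that, since $a,b$ are coprime in $\mathcal{O}_S$, there exist $c,d\in \mathcal{O}_S$ with $ad-bc=1$, so that $\bigl(\begin{smallmatrix} a & c \\ b & d\end{smallmatrix}\bigr)\in \mathrm{SL}_2(\mathcal{O}_S)$. A terminating division chain of length $k$ for $(a,b)$ translates, after accounting for the determinant $-1$ of the continued-fraction matrix $\bigl(\begin{smallmatrix} q & 1 \\ 1 & 0\end{smallmatrix}\bigr)$, into a factorization of this $\mathrm{SL}_2$-matrix as a product of $O(k)$ elementary matrices $E_{12}(x)$, $E_{21}(y)$ with entries in $\mathcal{O}_S$; conversely any such factorization can be repackaged into a division chain of comparable length. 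Thus a uniform bound on division chain length is equivalent, up to a constant, to a uniform bound on the elementary width of $\mathrm{SL}_2(\mathcal{O}_S)$.

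The second step would exploit the infinite-unit hypothesis in two essential ways. On the one hand, since $\mathcal{O}_S^\times$ is infinite, one can apply a generalized Dirichlet theorem (via strong approximation and Chebotarev density) to select the first quotient $q_1$ so that $r_1 := a - q_1 b$ generates a prime ideal of $\mathcal{O}_S$ lying in a prescribed ray class; since $(a,b) = \mathcal{O}_S$, there is no obstruction from the class group, exactly as required by Corollary~\ref{cor: condizionesuffperCFF}. On the other hand, one needs the natural map $\mathcal{O}_S^\times \to (\mathcal{O}_S/\mathfrak{p})^\times$ to have large image for such primes $\mathfrak{p}$, so that certain residues modulo $r_1$ needed in subsequent reductions can be realized by units of $\mathcal{O}_S$. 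Both of these are available precisely when the rank of $\mathcal{O}_S^\times$ is positive, and they fail entirely in the classical setting $\mathcal{O}_S = \mathbb{Z}$.

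The heart of the argument, which I expect to be the main obstacle, is the combinatorial core of Morgan--Rapinchuk--Sury: once $r_1$ generates a suitable prime ideal and $b$ is a unit modulo $r_1$, one must exhibit explicitly a short word in elementary matrices sending $(r_1,b)^{\mathrm{T}}$ to $(\ast,0)^{\mathrm{T}}$. Concretely, one chooses $q_2 \in \mathcal{O}_S$ so that $r_2 = b - q_2 r_1$ is a \emph{unit} of $\mathcal{O}_S$ (using the density/unit step above), after which a bounded number of further elementary operations terminates the chain. Weaker tools -- such as the Bass--Milnor--Serre machinery or Vaserstein's stable-range estimates -- already yield finiteness with some larger bound; pinning down the sharp value $5$ is what makes this step delicate and is where the simultaneous control of the class of $r_1$ and the unit-realization of residues modulo $r_1$ becomes indispensable.
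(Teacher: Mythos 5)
This theorem is cited by the paper directly from Morgan--Rapinchuk--Sury~\cite{morganEtAl:boundedGenSL2} and is not proved there; the closest the paper gets is describing the Cooke--Lenstra--Weinberger algorithm, which yields a length-$4$ chain \emph{under GRH} when $S$ contains a finite place. So there is no internal proof to compare against; I assess your sketch on its own. The overall strategy you describe (reduce to elementary width of $\mathrm{SL}_2(\mathcal{O}_S)$, select $q_1$ so that $r_1 = a - q_1 b$ generates a prime, then exploit the unit group modulo $r_1$) is indeed the correct flavor of the cited proof. However, you then propose choosing $q_2$ so that $r_2 = b - q_2 r_1$ is a \emph{unit}. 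If that step were achievable the chain would terminate at length $3$, since $r_1 = (r_1 r_2^{-1})\, r_2 + 0$ gives $r_3 = 0$ --- strictly stronger than the theorem, and not achievable in general. The obstruction is that $b$ is fixed, so making $r_2$ a unit would require the image of $\mathcal{O}_S^\times$ in $(\mathcal{O}_S/r_1)^\times$ to contain the residue class of $b$, whereas the available prime-selection results only guarantee that the image of the units has bounded, not trivial, index. This is precisely why the real algorithms are longer: in Cooke--Lenstra--Weinberger the second remainder is an auxiliary prime $p'$ with $b \equiv p' \pmod{r_1}$, and it is $r_3$, not $r_2$, that is a unit (length $4$ under GRH); Morgan--Rapinchuk--Sury need yet another step to dispense with GRH, hence length $5$.

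A second, more serious issue is that you present the crucial analytic input as automatic. You assert that both the prime-in-ray-class selection and the ``large unit image modulo $\mathfrak{p}$'' requirement are ``available precisely when the rank of $\mathcal{O}_S^\times$ is positive.'' That is not the case: producing primes in prescribed ray classes for which a given finite set of $S$-units has controlled image in $(\mathcal{O}_S/\mathfrak{p})^\times$, \emph{unconditionally}, is the central number-theoretic achievement of~\cite{morganEtAl:boundedGenSL2}. It is not a formal consequence of positive unit rank, and before their work the needed input was accessible only under GRH (as in Cooke--Weinberger). By treating it as an easy consequence of the hypothesis, your sketch effectively assumes the key lemma of the theorem it purports to prove.
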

The same result was proven -- under GRH -- in~\cite[Thms.\,2.2 and 2.9]{CookeWeinberger1975}, with the further observation that $a/b$ has a CF-expansion of length $4$ if $S$ contains at least one finite place. The following algorithm, introduced by Cooke, Lenstra and Weinberger, can be used to compute these CF-expansions whenever a finite principal place $\mathfrak{P}=(p)$ belongs to $S$:
\begin{itemize}
    \item Choose $q_1$ and $r_1$ such that
    \begin{align*}
        a = q_1b + r_1 &&\text{and} &&|r_1|_p=1.
    \end{align*}
    This ensures that the next step can be done within a finite amount of time.
    \item Check each principal prime ideal $(p')$ not in $S$ until the  following conditions are met:
    \begin{itemize}
        \item $b \equiv p' \mod r_1$,
        \item $r_1\equiv u \mod p'$, where $u$ is a unit.
    \end{itemize}
    We remark that it is sufficient to check the latter condition only on a finite number of units, since $\mathcal{O}_S/(p')$ is finite. 
    \item For the $p'$ found at the previous step, complete the chain
    \begin{align*}
        b&=\underbrace{\left(\frac{b-p'}{r_1}\right)}_{q_2}r_1 + \underbrace{p'}_{r_2},\\
        r_1 &= \underbrace{\left(\frac{r_1-u}{p'}\right)}_{q_3}p'+\underbrace{u}_{r_3}\\
        r_2 &= \underbrace{{u^{-1}r_2}}_{q_4}u
    \end{align*}
\end{itemize}
The above strategy can be implemented and allowed us to successfully find, for example, a $5$-adic CF-expansion of $7/3$ in the field $\mathbb{Q}(z)$ where $z$ is a root of $x^3+x+1$:
\begin{equation*}
    \frac{7}{3}=-1+\cfrac{1}{\frac{1-z}{5}+\cfrac{1}{-12z^2+ 6z - 15+\cfrac{1}{2z + 1}}}.
\end{equation*}
In general, though, the algorithm runs indefinitely due to the hardness of finding $p'$. However, as far as we know, there is still no efficient algorithm for calculating the corresponding division chains. This is a further reason for considering the floor-function-based approach, which seems easier to implement and, as already remarked, has the further property of providing converging approximations of the elements in $K_\mathfrak{P}\setminus K$.

\section*{Acknowledgements}
The authors thank Denis Simon for suggesting a more compact proof of Proposition~\ref{prop:classgroup}. They also thank the anonymous reviewers for their feedback and comments.

They all acknowledge the IEA (International Emerging Action) project PAriAlPP (Probl\`emes sur l'Arithm\'etique et l'Alg\`ebre des Petits Points) of the CNRS for the financial support. 

Laura Capuano, Marzio Mula and Lea Terracini  are members of the INdAM group GNSAGA. 

Sara Checcoli’s work has been also supported by the French National Research Agency in the framework of the Investissements d’avenir program (ANR-15-IDEX-02).

\printbibliography

\end{document}